\newcommand{\nn}{\underline{n} }
\newcommand{\mm}{\underline{m} }
\newcommand{\II}{\underline{I} }
\newcommand{\n}{\mathfrak{n} }
\newcommand{\m}{\mathfrak{m} }
\newcommand{\fp}{\mathfrak{p}}
\newcommand{\f}{\mathcal{F}}
\newcommand{\R}{\mathcal{R}}
\newcommand{\N}{\mathbb{N} }
\newcommand{\I}{\mathcal{I}}
\newcommand{\QQ}{\mathbb{Q} }
\newcommand{\C}{\mathbb{C} }
\newcommand{\V}{{\rm V}}
\newcommand{\Ass}{\operatorname{Ass}}
\newcommand{\depth}{\operatorname{depth}}
\newcommand{\grade}{\operatorname{grade}}
\newcommand{\gr}{\operatorname{gr}}
\newcommand{\Frac}{\operatorname{Frac}}
\newcommand{\sing}{\operatorname{Sing}}
\newcommand{\jac}{\operatorname{Jac}}
\newcommand{\ann}{\operatorname{ann}}
\newcommand{\hgt}{\operatorname{ht}}
\providecommand\Spec{\text{\rm Spec}}
\newcommand{\chr}{\operatorname{char}}
\theoremstyle{plain}
\newtheorem{theorem}{Theorem}[section]
\newtheorem{corollary}[theorem]{Corollary}
\newtheorem{lemma}[theorem]{Lemma}
\newtheorem{proposition}[theorem]{Proposition}
\theoremstyle{definition}
\newtheorem{definition}[theorem]{Definition}
\newtheorem{remark}[theorem]{Remark}
\newtheorem{remarks}[theorem]{Remarks}
\newtheorem{example}[theorem]{Example}
\newtheorem*{example*}{\it Example}
\theoremstyle{remark}
\newtheorem*{claim*}{\it Claim}
\newtheorem*{case*}{\it Case}
\title[Eakin-Sathaye theorem]{Eakin-Sathaye type theorems \\ [2mm]  for joint reductions and good filtrations of ideals}
\thanks{{\it 2010 AMS Mathematics Subject Classification:} 13A30 }
\thanks{{\it Key words}: Eakin-Sathaye theorem, good filtrations, equimultiple good filtrations, joint reduction}
\thanks{Both of the first author and the second author are supported by UGC fellowship, Govt. of India}
\author{Kriti Goel}
\address{Department of Mathematics, Indian Institute of Technology Bombay, Mumbai, 400076, India}
\email{kriti@math.iitb.ac.in}
\author{Sudeshna Roy}
\address{Department of Mathematics, Indian Institute of Technology Bombay, Mumbai, 400076, India}
\email{sudeshnaroy@math.iitb.ac.in}
\author{J. K. Verma}
\address{Department of Mathematics, Indian Institute of Technology Bombay, Mumbai, 400076, India}
\email{jkv@math.iitb.ac.in}
\begin{document}

\begin{abstract}
Analogues of Eakin-Sathaye theorem for reductions of ideals are proved for $\N^s$-graded good filtrations. These analogues  yield bounds on joint reduction vectors for a family of ideals and reduction numbers for $\N$-graded filtrations.  Several examples related to lex-segment ideals, contracted ideals in $2$-dimensional regular local rings and the filtration of integral and tight closures  of powers of ideals in hypersurface rings are constructed to show  effectiveness of these bounds.
\end{abstract}

\maketitle
\centerline{\em Dedicated to Le Tuan Hoa on the occasion of his sixtieth birthday}
\section{Introduction}

The objective of this paper is to prove Eakin-Sathaye type theorems \cite{ES1976} for joint reductions and good filtrations of ideals. Recall that an ideal $J$ contained in an ideal $I$ in a commutative ring $R$ is called a reduction of $I$ if there is a non-negative integer $n$ such that $JI^n=I^{n+1}.$ The concept of reduction of an ideal was introduced by Northcott and Rees \cite{NR1954}. It has  become an important tool in many investigations in commutative algebra and algebraic geometry such as Hilbert-Samuel functions \cite{rees1961}, blow-up algebras \cite{GS1979}  singularities of  hypersurfaces \cite{teissier1973}, number of defining equations of algebraic varieties
\cite{lyubeznik1986} and many others.

Research in this paper is motivated by the following result of Paul Eakin and Avinash Sathaye \cite{ES1976}.  Let $\mu(I)$ denote the minimum number of generators of an ideal $I$ in a  local ring.

\begin{theorem} 
	Let $I$ an ideal of a local  ring $R$ with infinite residue field. If $\mu(I^n)< \binom{n+r}{r}$ for some positive integers $n$ and $r$, then there is a reduction $J$ of $I$ generated by $r$ elements such that $JI^{n-1}=I^n.$
\end{theorem}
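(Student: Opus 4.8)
The plan is to replace $I$ by its fibre cone and reduce the assertion to a purely graded statement about general linear forms. Put $k=R/\m$, which is infinite, and let $F=\bigoplus_{j\ge 0}I^{j}/\m I^{j}$ be the fibre cone of $I$; this is a standard graded $k$-algebra with $F_{0}=k$ and $\dim_{k}F_{j}=\mu(I^{j})$ for $j\ge 1$. If $x_{1},\dots,x_{r}\in I$ have images $\ell_{1},\dots,\ell_{r}\in F_{1}$, then Nakayama's lemma gives $(x_{1},\dots,x_{r})I^{n-1}=I^{n}$ precisely when $\ell_{1}F_{n-1}+\dots+\ell_{r}F_{n-1}=F_{n}$, and once this holds, multiplying by $I^{m-n+1}$ yields $(x_{1},\dots,x_{r})I^{m}=I^{m+1}$ for all $m\ge n-1$, so $J=(x_{1},\dots,x_{r})$ is automatically a reduction of $I$. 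Since $k$ is infinite and the set of tuples $(\ell_{1},\dots,\ell_{r})\in F_{1}^{\,r}$ for which the multiplication map $F_{n-1}^{\,r}\to F_{n}$, $(a_{i})\mapsto\sum_{i}\ell_{i}a_{i}$, is surjective is Zariski open in the affine space $F_{1}^{\,r}$, it is enough to show that a tuple of $r$ \emph{general} linear forms works. So I am reduced to the following: if $A$ is a standard graded algebra over an infinite field $k$ with $A_{0}=k$ and $\dim_{k}A_{n}<\binom{n+r}{r}$, then general linear forms $\ell_{1},\dots,\ell_{r}\in A_{1}$ satisfy $A_{n}=(\ell_{1},\dots,\ell_{r})A_{n-1}$.

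I would prove the contrapositive --- if $(A/(\ell_{1},\dots,\ell_{r})A)_{n}\ne 0$ for general $\ell_{i}$, then $\dim_{k}A_{n}\ge\binom{n+r}{r}$ --- by a double induction on $(r,n)$, the cases $r=0$ (which forces $A_{n}=0$) and $n=0$ (where $\dim_{k}A_{0}=1=\binom{r}{r}$) being immediate. For the inductive step choose $\ell_{1}\in A_{1}$ general, set $\ov A=A/\ell_{1}A$, and observe that $(\ov A/(\ov\ell_{2},\dots,\ov\ell_{r})\ov A)_{n}=(A/(\ell_{1},\dots,\ell_{r})A)_{n}\ne 0$; since $A/(\ell_{1},\dots,\ell_{r})A$ is generated in degree one, nonvanishing in degree $n$ forces nonvanishing in degree $n-1$ as well. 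Applying the inductive hypothesis to $A$ at $(r,n-1)$ gives $\dim_{k}A_{n-1}\ge\binom{n-1+r}{r}$, and to $\ov A$ at $(r-1,n)$ gives $\dim_{k}\ov A_{n}\ge\binom{n+r-1}{r-1}$. From the exact sequence
\[
0\longrightarrow (0:_{A}\ell_{1})_{n-1}\longrightarrow A_{n-1}\xrightarrow{\ \ell_{1}\ }A_{n}\longrightarrow \ov A_{n}\longrightarrow 0
\]
one gets $\dim_{k}A_{n}=\dim_{k}\ov A_{n}+\dim_{k}A_{n-1}-\dim_{k}(0:_{A}\ell_{1})_{n-1}$, hence, using Pascal's identity $\binom{n+r-1}{r-1}+\binom{n+r-1}{r}=\binom{n+r}{r}$,
\[
\dim_{k}A_{n}\ \ge\ \binom{n+r}{r}-\dim_{k}(0:_{A}\ell_{1})_{n-1}.
\]
So everything rests on the vanishing $(0:_{A}\ell_{1})_{n-1}=0$ for a general linear form $\ell_{1}$. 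This is free when $\depth A>0$: a general $\ell_{1}\in A_{1}$ avoids every associated prime of $A$ other than $A_{+}$ (here one uses that $k$ is infinite), hence is a nonzerodivisor; iterating, $\ell_{1},\dots,\ell_{r}$ become a regular sequence, and then the statement also follows at once from $\operatorname{Hilb}(A/(\ell_{1},\dots,\ell_{r})A,t)=\operatorname{Hilb}(A,t)(1-t)^{r}$ together with $\sum_{j=0}^{n}\binom{n-j+r-1}{r-1}=\binom{n+r}{r}$ and the downward propagation of nonvanishing.

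The main obstacle, as I see it, is the low-depth case, where a general linear form can annihilate something in the single critical degree $n-1$. I would first note that the hypothesis already forces $\dim A\le r$: otherwise $r+1$ general linear forms are algebraically independent over $k$, so $k[\ell_{1},\dots,\ell_{r+1}]$ embeds in $A$ and $\dim_{k}A_{j}\ge\binom{j+r}{r}$ for every $j$, contradicting $\dim_{k}A_{n}<\binom{n+r}{r}$. Next, since $A/H^{0}_{A_{+}}(A)$ has positive depth (when $\dim A>0$) and agrees with $A$ --- both as a graded vector space and after dividing out $(\ell_{1},\dots,\ell_{r})$ --- in every degree above the finitely many degrees in which $H^{0}_{A_{+}}(A)$ is nonzero, the case in which $n$ lies above those degrees reduces to the positive-depth situation already treated. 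What remains is the case of small $n$ relative to the socle degrees of $A$; here I would pass, via a general change of coordinates together with Macaulay's theorem on Hilbert functions, to a monomial (lex-segment) ideal, for which the behaviour of $A$ modulo the last variables --- the analogues of $(0:_{A}\ell_{1})_{n-1}$ and of the four-term exact sequence above --- is completely explicit and can be estimated directly. Making this monomial reduction interact correctly with the one distinguished degree $n$ is the step I expect to require the most care.
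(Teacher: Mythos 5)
Your reduction to the fibre cone and the resulting graded statement --- if $A$ is standard graded over an infinite field $k$ with $A_0=k$ and $\dim_k A_n<\binom{n+r}{r}$, then $r$ general linear forms $\ell_1,\dots,\ell_r$ satisfy $A_n=(\ell_1,\dots,\ell_r)A_{n-1}$ --- is correct and is also how the paper proceeds: Proposition 3.3 in the $s=1$ case is exactly this, with the $|\underline{n}|=1$ base case handed off to Caviglia's Theorem 2.1. The gap is in your proof of the graded statement. As you observe, the four-term exact sequence only yields
\[\dim_k A_n \ \ge\ \binom{n+r}{r}-\dim_k(0:_A\ell_1)_{n-1},\]
and the required vanishing $(0:_A\ell_1)_{n-1}=0$ for a general $\ell_1$ simply fails in depth zero; nothing prevents the socle of $A$ from sitting in the one degree you care about. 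Your patch --- pass to $A/H^0_{A_+}(A)$ once $n$ clears the socle degrees, and otherwise ``pass via Macaulay's theorem to a lex-segment ideal'' --- is a description of a hoped-for argument rather than an argument. The assertion that a general hyperplane restriction of $A$ is at least as small, degree by degree, as the restriction of the associated lex-segment ring is precisely Green's hyperplane restriction theorem, and that theorem is the actual mathematical content of Caviglia's proof; it does not follow from a change of coordinates plus Macaulay's bound. So you have correctly located the crux, but it remains unproved.

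It is worth recording how the paper sidesteps this. Rather than the exact sequence, the proof of Proposition 3.3 splits on a basis $y_1,\dots,y_p$ of $A_1$: either some $y_j$ satisfies $\dim_k y_jA_{n-1}\ge\binom{n+r-1}{r}$, in which case one passes to $A/y_jA$, whose degree-$n$ piece then has dimension below $\binom{n+r-1}{r-1}$, and inducts on $r$; or every $y_j$ satisfies the opposite strict inequality, in which case one passes to $A/\operatorname{ann}_A(y_j)$, whose degree-$(n-1)$ piece is $y_jA_{n-1}$ and hence small, inducts on $n$, and sums over $j$ using $A_n=\sum_j y_jA_{n-1}$ together with an intersection of the resulting Zariski-open sets. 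The inequality in the first case does exactly the work you wanted $(0:_A\ell_1)_{n-1}=0$ to do, but without assuming it, and the second case supplies the complementary bound. That case split, or equivalently Green's theorem, is the ingredient missing from your proposal.
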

The case of $n=2, r=1$ was proved by J. D. Sally \cite{sally1975}. The EST (Eakin-Sathaye Theorem) has been  revisited by G. Caviglia \cite{caviglia},  N. V. Trung \cite{trung} and Liam O'Carroll \cite{carroll}. Caviglia used Green's hyperplane section theorem to give a new proof of the EST. The EST was generalised by Liam O'Carroll for complete reductions
\cite{carroll}. The versions of the EST proved in this paper follow the approach used by O'Carroll. In order to state his result we recall necessary definitions and results about complete reductions and joint reductions of a family of ideals introduced by Rees in \cite{rees1984}. Recall that the {\em analytic spread } of an ideal $I$ in a local ring $(R,\m)$ 
is the Krull dimension of the  fiber cone of $I,$ namely, $F(I)=\oplus_{n=0}^\infty I^n/\m I^n .$ The analytic spread of $I$ is denoted by $\ell(I).$ Let $I_1, I_2,\dots, I_g$ be ideals of  $R$ with  $s=\ell(I_1I_2\cdots I_g).$ Then there is a $g\times s$ matrix $A=(a_{ij})$ with entries $a_{ij}\in I_i$ for $i=1,2,\dots, g$ and $j=1,2,\dots, s$ such that the ideal
$(y_1,y_2,\dots, y_s)$  is a reduction of the product $I_1I_2\cdots I_g.$ Here $y_j=\prod_{i=1}^ga_{ij}$ for $j=1,2,\dots,s$ and the set of elements $a_{ij}$, $i=1,\ldots,g,$ $j=1,\ldots,s$, is called a complete reduction of the set of ideals $I_1,\ldots,I_g.$ 

Let $\dim R=d$ and $I_1, I_2,\dots, I_d$ be $\m$-primary ideals of $R.$ Let $e_1,e_2, \dots, e_d$ be the standard basis of the $d$-dimensional $\QQ$-vector space $\QQ^d.$ If $\nn=(n_1,\dots, n_d) \in \N^d$ then we write $\underline{I}^{\nn}=I_1^{n_1}I_2^{n_2}\cdots I_d^{n_d}.$ A set of elements $(a_1, a_2,\dots, a_d)$ where $a_i\in I_i$  for $i=1,2,\dots, d$ is called a {\em joint reduction}  of the set of ideals $(I_1, I_2,\dots, I_d)$ if there exists $\underline{r}=(r_1, r_2,\dots, r_d)\in \N^d$ such that for all $n_i\geq r_i$ for $i=1,2,\dots, d,$
$$\sum_{i=1}^d a_i \underline{I}^{\nn-e_i}=\underline{I}^{\nn}.$$
The vector $(r_1, r_2,\dots, r_d)$ is called a {\em joint reduction vector} of $I$ with respect to the joint reduction $(a_1, a_2, \dots, a_d).$ Rees proved that if the set of elements $a_{ij}$, $i,j=1,\ldots,d$ is a complete reduction of $I_1,\ldots,I_d$ and if $i\rightarrow j(i)$ is a permutation of $1,\ldots,d$, then the set of elements $a_{ij(i)}$, $i=1,\ldots,d$ is a joint reduction of $I_1,\ldots,I_d.$ O'Carroll gave the following result for complete reductions in \cite{carroll}.

\begin{theorem}[\bf L. O'Carroll] 
	Let $(R,\m)$ be a Noetherian local ring with infinite residue field $k:=R/\m.$ Let $I_1, I_2,\dots, I_s$ be ideals in $R$ and $I=I_1I_2\cdots I_s.$
	Suppose that for some $n\geq 1$ and $r\geq 0$ $\mu(I^n) < \binom{n+r}{r}.$ Then there exist ``general"  elements $y_1, y_2,\dots, y_r$ with $y_j=x_{1j}\cdots x_{sj}, j=1,2,\dots, r$ where $x_{ij}\in I_i$ for $i=1,2,\dots, s$ such that
	$$(y_1, y_2, \dots, y_r)I^{n-1}=I^n.$$
\end{theorem}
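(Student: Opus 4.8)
The plan is to pass to the fiber cone $F=F(I)$ of $I=I_1\cdots I_s$, invoke the classical Eakin--Sathaye theorem (the first theorem above) there, and then force the ``general'' generators it produces to be of product form. Since $\dim_k F_m=\mu(I^m)$, Nakayama's lemma gives, for $y_1,\dots,y_r\in I$ with images $\overline y_1,\dots,\overline y_r$ in $F_1=I/\m I$,
\[
(y_1,\dots,y_r)I^{n-1}=I^n\quad\Longleftrightarrow\quad \sum_{j=1}^{r}\overline y_j\,F_{n-1}=F_n .
\]
So, reformulated, the classical Eakin--Sathaye theorem asserts that under the hypothesis $\mu(I^n)=\dim_kF_n<\binom{n+r}{r}$ the ``good locus''
\[
U=\Bigl\{(\ell_1,\dots,\ell_r)\in F_1^{\oplus r}:\ \textstyle\sum_{j=1}^{r}\ell_jF_{n-1}=F_n\Bigr\}
\]
is a nonempty Zariski-open subset of the affine space $F_1^{\oplus r}$ over the infinite field $k=R/\m$: open, because surjectivity of the map $F_{n-1}^{\oplus r}\to F_n$ attached to $(\ell_1,\dots,\ell_r)$ is a maximal-rank condition; nonempty, by the Eakin--Sathaye theorem itself. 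Note in passing that the hypothesis forces $r\ge\ell(I)$, because $F$ contains a polynomial subring on a homogeneous system of parameters, so $\dim_kF_n\ge\binom{n+\ell(I)-1}{\ell(I)-1}$, and $\binom{n+a}{a}$ is strictly increasing in $a$.

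I would then produce products with prescribed image in $F_1$. Fixing generators $g^{(i)}_1,\dots,g^{(i)}_{\mu(I_i)}$ of $I_i$, multiplication induces a morphism of affine $k$-varieties
\[
\mu\colon (I_1/\m I_1)\times\cdots\times(I_s/\m I_s)\longrightarrow F_1,\qquad (\overline x_1,\dots,\overline x_s)\longmapsto\overline{x_1\cdots x_s},
\]
which is well defined, multilinear, and has image spanning $F_1$ (products of the $g^{(i)}_k$ generate $I$). Set $C=\overline{\im\mu}\subseteq F_1$, an irreducible closed subvariety; then also $C^{\oplus r}=\overline{\im(\mu\times\cdots\times\mu)}\subseteq F_1^{\oplus r}$ is irreducible. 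For $t=(t_{ijk})$ in the affine space $\mathbb{A}^{N}_k$, $N=r\sum_i\mu(I_i)$, put $\overline x_{ij}=\sum_k t_{ijk}\overline g^{(i)}_k$ and $\overline y_j=\mu(\overline x_{1j},\dots,\overline x_{sj})$; the assignment $t\mapsto(\overline y_1,\dots,\overline y_r)$ is a morphism $\mathbb{A}^{N}_k\to F_1^{\oplus r}$ with image dense in $C^{\oplus r}$. Granting the key claim
\[
C^{\oplus r}\cap U\neq\emptyset,
\]
the preimage of $U$ under this morphism is a nonempty open subset of $\mathbb{A}^{N}_k$, so (as $k$ is infinite) it contains a $k$-rational point $t^{\circ}$. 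Lifting the coordinates of $t^{\circ}$ from $k=R/\m$ to $R$ and setting $x_{ij}=\sum_k t^{\circ}_{ijk}g^{(i)}_k\in I_i$ and $y_j=x_{1j}\cdots x_{sj}\in I$, the images of the $y_j$ in $F_1$ are precisely the $\overline y_j(t^{\circ})$, so $\sum_j\overline y_j F_{n-1}=F_n$; by the Nakayama equivalence, $(y_1,\dots,y_r)I^{n-1}=I^n$, which is the assertion (with the $y_j$ general among products).

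The crux is the claim $C^{\oplus r}\cap U\neq\emptyset$: that $r$ \emph{general products} still generate $F_n$ over $F_{n-1}$, even though $C$ may be a proper (Segre-type) subvariety of $F_1$, so that $r$ general points of $C$ need not span a general $r$-dimensional subspace. I would attack it by imitating Caviglia's proof of the classical theorem, which uses Green's hyperplane restriction theorem to bound, for general linear forms $\ell_1,\dots,\ell_r\in F_1$, the colength $\dim_k\bigl(F/(\ell_1,\dots,\ell_r)\bigr)_n$ by the $r$-fold iterate of Macaulay's $\langle n\rangle$-operation applied to $\dim_kF_n$; this is $<\binom{n}{0}=1$, hence $0$, once $\dim_kF_n<\binom{n+r}{r}$. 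The task is to obtain the same colength estimate when each $\ell_i$ is replaced by a general member $\overline y_i=\overline x_{1i}\cdots\overline x_{si}$ of $C$ --- that is, a Green-type restriction bound valid along products. The structural input that should make this go through --- and the precise point at which one must use that $I$ is a \emph{product} of ideals rather than an arbitrary ideal --- is twofold: first, $\im\mu$ is the union of the linear subspaces $\{\overline x_1\cdot w:\overline x_1\in I_1/\m I_1\}$, with $w$ ranging over products from the remaining factors (and symmetrically in the $s$ factors), all of which span $F_1$; second, $F$ is the diagonal subalgebra of the $\N^s$-graded multi-fiber-cone $\bigoplus_{\nn\in\N^s}\underline{I}^{\nn}/\m\underline{I}^{\nn}$, in which $F_n$ is spanned by products of elements of the degree-$e_i$ pieces. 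Together with Rees's theorem that general complete reductions of $I_1,\dots,I_s$ restrict to joint reductions --- which, since $r\ge\ell(I)$, already shows the $y_j$ generate \emph{some} reduction of $I$ --- this should force a general product to effect the same Macaulay drop as a general linear form. Alternatively one can follow O'Carroll's original device of applying the classical Eakin--Sathaye theorem over a polynomial extension of $R$ and then specializing, which is the route the present paper follows. In either approach, establishing the restriction bound along products is the main obstacle.
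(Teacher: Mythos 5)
The paper does not actually prove this theorem: it is quoted from O'Carroll's article \cite{carroll}, and the paper's own Theorem \ref{esthm-mul-fil}, Proposition \ref{mul-fil-pro}, and Corollary \ref{mulg-adic} are different statements (general elements inside each $I_i$ yielding joint-reduction equations, not general \emph{products} $y_j=x_{1j}\cdots x_{sj}$), proved by a direct double induction in the multi-graded fiber cone. So there is no internal proof to compare against; your attempt must stand on its own, and it does not.

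The setup is fine: reduce to the fiber cone $F=F(I)$, translate via Nakayama to $\sum_j\overline y_jF_{n-1}=F_n$, and note that the locus $U\subseteq F_1^{\oplus r}$ where this holds is non-empty and open by classical Eakin--Sathaye. You also correctly identify the crux: the product map has image in a proper Segre-type subvariety $C\subsetneq F_1$, and a non-empty open $U$ need not meet $C^{\oplus r}$. But that is where the argument stops. Neither route you sketch closes the gap. The Caviglia route would need a Green-type hyperplane restriction theorem for sections by a general element \emph{of the Segre locus} rather than by a general linear form; you state that this is ``the structural input that should make this go through'' but supply no argument, and no such restriction theorem is cited or established. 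The Rees complete-reduction route only yields that $r\ge\ell(I)$ generic products generate \emph{some} reduction, i.e.\ $(y_1,\dots,y_r)I^m=I^{m+1}$ for $m\gg0$; the entire content of Eakin--Sathaye is to force the specific exponent $m=n-1$ from the numerical hypothesis $\mu(I^n)<\binom{n+r}{r}$, and ``is a reduction'' says nothing about the reduction number. Consequently the key claim $C^{\oplus r}\cap U\neq\emptyset$ is asserted, not proved, and the proposal as written is incomplete. A workable proof (both O'Carroll's original and the paper's adaptation for its own theorems) runs a Sally/Eakin--Sathaye style double induction directly inside the multi-graded fiber cone $G=F(I_1,\dots,I_s)$, splitting into the case where some degree-$e_i$ generator multiplies $F$ with large image (pass to $F/yF$, decrease $r$) and the complementary case (pass to $F/\operatorname{ann}_F(y)$, decrease $n$); that is the argument you would need to carry out along products, and it does not appear here.
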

We now describe the main results proved in this paper. An $\N^s$-graded filtration of ideals $\{I_{\nn}\}_{\nn\in \N^s}$ in $R$ is a collection of ideals which satisfies the conditions
(1) $I_{\nn}\subseteq I_{\mm}$ for all ${\nn} \geq \mm$  and (2) $I_{\nn}I_{\mm} \subseteq I_{\nn+\mm}$ 
for all $\nn, \mm \in \N^s.$ We say that the filtration  $\mathcal F =\{I_{\nn}\}_{\nn\in \N^s}$  is {\em good} if the Rees algebra $\mathcal R(\mathcal F)=\oplus_{\nn \in \N^s}I_{\nn} \underline{t}^{\nn}$ is a finite module over the Rees algebra $\mathcal R(I_{e_1},\dots, I_{e_s})=\oplus_{\nn \in \N^s} I_{e_1}^{n_1}\cdots I_{e_s}^{n_s}\underline{t}^{\nn}.$ Here $\underline{t}^{\nn}=t_1^{n_1}\dots t_s^{n_s}$ where $t_1, \dots, t_s$ are indeterminates and $\nn=(n_1, \dots, n_s)\in \N^s$ and $\mm=(m_1,\ldots,m_s)\geq\nn=(n_1,\ldots,n_s)$ if $m_i\geq n_i$ for all $i= 1, \ldots, s.$ We shall prove the following result in Section 3.

\begin{theorem}\label{1.3}
	Let $(R, \m, k)$ be a Noetherian local ring with infinite residue field $k$ and let $\f=\{I_{\underline{n}}\}_{\nn \in \N^s}$ be an $\N^s$-graded good filtration in $R.$ 
	Suppose 
	\[\mu(I_{\underline{n}})< \binom{n_1+r_1}{r_1} \cdots \binom{n_s+r_s}{r_s}\] 
	for some integers $n_1+\cdots+n_s \geq 1$ and $r_1+\cdots+r_s \geq 0$. Let $\underline{a}$ be the maximum of the degree of the generators of $F=F(\mathcal{F})$ as a module over $G=F(I_{e_1}, \ldots, I_{e_s}),$ where the maximum is taken component-wise. If $\underline{n} \geq \underline{a}+\underline{1}$, then for all $1 \leq i \leq s$ there exist ``general" elements $x_{i1}, \dots, x_{ir_i} \in I_{e_i}$ such that $I_{\underline{n}}= \sum_{i=1}^s (x_{i1}, \dots, x_{ir_i})I_{\underline{n}-e_i}.$
\end{theorem}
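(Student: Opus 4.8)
The plan is to pass to the multi-fiber cone, reinterpret the desired equality as the vanishing of a single graded component of a quotient module, and then invoke a multigraded hyperplane-restriction (Green/Macaulay-type) theorem. First I would reduce to the fiber cone. Since $\f$ is good, $F=F(\f)=\bigoplus_{\underline{m}}I_{\underline{m}}/\m I_{\underline{m}}$ is a finitely generated $\mathbb{N}^s$-graded module over $G=\bigoplus_{\underline{m}}\II^{\underline{m}}/\m\,\II^{\underline{m}}$, and $G$ is a standard $\mathbb{N}^s$-graded $k$-algebra, being generated over $k$ by $G_{e_1},\dots,G_{e_s}$ because $I_{e_i}\II^{\underline{m}}=\II^{\underline{m}+e_i}$. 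As $F$ is generated over $G$ in degrees $\le\underline{a}$ and $\underline{n}\ge\underline{a}+\underline{1}$, one gets $F_{\underline{n}}=\sum_{i=1}^sG_{e_i}F_{\underline{n}-e_i}$. Given $x_{ij}\in I_{e_i}$ with images $\overline{x}_{ij}\in G_{e_i}=I_{e_i}/\m I_{e_i}$, Nakayama's lemma, applied to $I_{\underline{n}}$ versus the submodule $\sum_i(x_{i1},\dots,x_{ir_i})I_{\underline{n}-e_i}$, shows that the sought identity $I_{\underline{n}}=\sum_i(x_{i1},\dots,x_{ir_i})I_{\underline{n}-e_i}$ is equivalent to $(F/JF)_{\underline{n}}=0$, where $J=(\overline{x}_{ij}:1\le i\le s,\ 1\le j\le r_i)$. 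Moreover $\mu(I_{\underline{n}})=\dim_kF_{\underline{n}}$, so the hypothesis is exactly $\dim_kF_{\underline{n}}<\dim_kP_{\underline{n}}$, where $P=k[y_{ij}:1\le i\le s,\ 0\le j\le r_i]$ with $\deg y_{ij}=e_i$.

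For the vanishing $(F/JF)_{\underline{n}}=0$ I would present $G$ as a quotient of a multigraded polynomial ring $S=k[z_{ij}:1\le i\le s,\ 1\le j\le\mu(I_{e_i})]$ with $\deg z_{ij}=e_i$, so that $F$ becomes a finitely generated $\mathbb{N}^s$-graded $S$-module, and then apply a multigraded analogue of Green's hyperplane restriction theorem for modules (equivalently, run an induction on $s$ using Macaulay's theorem). The key input is that cutting $F$ by $r_i$ general forms of degree $e_i$, for each $i$, yields a module whose Hilbert function in degree $\underline{n}$ is at most the value obtained from $\dim_kF_{\underline{n}}$ by the appropriate multigraded Macaulay operation, one which in the $i$-th ``slot'' replaces $\binom{n_i+r_i}{r_i}$ by $\binom{n_i}{0}=1$ after the $r_i$ cuts; since $\dim_kF_{\underline{n}}\le\binom{n_1+r_1}{r_1}\cdots\binom{n_s+r_s}{r_s}-1$, that operation returns $0$. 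To finish, each condition on the tuples $(\overline{x}_{i1},\dots,\overline{x}_{ir_i})\in G_{e_i}^{r_i}$ needed to make the restriction steps valid is a nonempty Zariski-open condition, which is where $k$ being infinite enters, and there are only finitely many of them, so their common refinement is dense open; as $I_{e_i}$ surjects onto $G_{e_i}$, a general choice of $x_{ij}\in I_{e_i}$ has general image $\overline{x}_{ij}$, which completes the proof.

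The hard part is the multigraded restriction bound. One cannot simply iterate the ordinary module Green inequality, applying the degree-$n_i$ Macaulay (Green) operation once per form of degree $e_i$: for $s\ge2$ the resulting iterated estimate on $\dim_k(F/JF)_{\underline{n}}$, obtained by applying for each $i$ the degree-$n_i$ operation $r_i$ times to $\dim_kF_{\underline{n}}$, need not vanish under the stated hypothesis; already for $s=2$, $n_1=n_2=r_1=r_2=1$ and $\dim_kF_{\underline{n}}=3<4$ it equals $1$, because the budget $\binom{n_1+r_1}{r_1}\cdots\binom{n_s+r_s}{r_s}$ does not split off a single binomial factor under the single-graded operation, regardless of the order of the cuts. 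So the proof must exploit the genuinely $\mathbb{N}^s$-graded structure: compare, via multigraded Macaulay inequalities, the Hilbert function of $F/JF$ with that of the multigraded lex (or generic-initial) module sharing the Hilbert function of $F$, and check that the latter, being strictly smaller than $P$ in degree $\underline{n}$, is annihilated in degree $\underline{n}$ upon cutting the relevant variables; alternatively, carry out the induction on $s$ while tracking the whole Hilbert function rather than just its value at $\underline{n}$. Establishing this sharper multigraded combinatorial estimate, and verifying that the first reduction step is faithful, is where essentially all of the work lies.
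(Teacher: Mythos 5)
Your reduction to the fiber cone is correct and matches the paper: passing from $\mu(I_{\underline{n}})$ to $\dim_k F_{\underline{n}}$, reformulating the conclusion as $(F/JF)_{\underline{n}}=0$ via graded Nakayama, and observing that $\underline{n}\ge\underline{a}+\underline{1}$ forces $F_{\underline{n}}=\sum_i G_{e_i}F_{\underline{n}-e_i}$. But the heart of your argument is an unestablished ``multigraded Green/Macaulay hyperplane-restriction theorem,'' and you acknowledge this yourself. Your own counterexample ($s=2$, $n_1=n_2=r_1=r_2=1$, $\dim_k F_{\underline{n}}=3$) already shows that na\"ively iterating the single-graded module version of Green's theorem once per linear form does not yield vanishing, and no multigraded substitute is proved or cited. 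As written, the proposal therefore contains a genuine gap: it defers essentially all the work to a combinatorial estimate that is never supplied.

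The paper avoids Caviglia-style hyperplane restriction entirely. Its proof (Lemma \ref{lem1} together with Proposition \ref{mul-fil-pro}) is a minimal-counterexample double induction on $\sum r_i$ and $\sum n_i$ in the style of O'Carroll. Fixing a basis $y_{i1},\dots,y_{ip_i}$ of $G_{e_i}$, one dichotomizes: \textbf{Case 1}, some $y_{1j}F_{\underline{n}-e_1}$ has dimension at least $\binom{n_1+r_1-1}{r_1}\prod_{t\ne1}\binom{n_t+r_t}{r_t}$; then pass to $\overline{F}=F/y_{1j}F$, which drops the bound to $\binom{n_1+r_1-1}{r_1-1}\prod_{t\ne1}\binom{n_t+r_t}{r_t}$ and reduces $r_1$ by one (Lemma \ref{lem1} lifts the conclusion back to $F$ and shows the lifted elements are still general). \textbf{Case 2}, every $y_{ij_i}F_{\underline{n}-e_i}$ is smaller than $\binom{n_i+r_i-1}{r_i}\prod_{t\ne i}\binom{n_t+r_t}{r_t}$; then pass to $F^{(ij_i)}=F/\ann_F(y_{ij_i})$, which realizes $y_{ij_i}F_{\underline{n}-e_i}\simeq F^{(ij_i)}_{\underline{n}-e_i}$ and reduces $n_i$ by one; intersecting the finitely many resulting Zariski-open conditions gives a single general choice that absorbs $\sum_i\bigl(\sum_{j_i}y_{ij_i}\bigr)F_{\underline{n}-e_i}$, and this equals $F_{\underline{n}}$ precisely because $\underline{n}\ge\underline{a}+\underline{1}$. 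This is where the hypothesis on $\underline{a}$ enters; in your route it plays no visible role, which is another symptom that the proposed mechanism is not the one doing the work. If you want a complete proof, the inductive two-case argument is the elementary path; if you insist on a hyperplane-restriction route, you would first have to formulate and prove the multigraded analogue, which is a nontrivial separate project.
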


In particular, we get the following result for product of adic filtration.
\begin{corollary}\label{cor}
	Let $(R, \m, k)$ be a Noetherian local ring with infinite residue field $k$ and $I_1,\ldots, I_s$ be ideals in $R$. Suppose 
	\[\mu(I_1^{n_1}\cdots I_s^{n_s})< \binom{n_1+r_1}{r_1}\cdots \binom{n_s+r_s}{r_s}\] 
	for some integers $n_1+\cdots+n_s \geq 1$ and $r_1+\cdots+r_s \geq 0$. Then for all $1 \leq i \leq s$ there exist ``general" elements $x_{i1}, \dots, x_{ir_i} \in I_i$ such that for all $\underline{m} \geq \underline{n}$, 
	\[I_1^{m_1}\cdots I_s^{m_s}= \sum_{i=1}^s (x_{i1}, \dots, x_{ir_i})I_1^{m_1}\cdots I_i^{m_i-1}\cdots I_s^{m_s}.\]
\end{corollary}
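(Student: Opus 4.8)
The plan is to realise the product of adic filtrations as an $\N^s$-graded good filtration whose fiber cone $F$ is generated in degree $\underline{0}$ over $G$ (so that the vector $\underline{a}$ of Theorem \ref{1.3} may be taken to be $\underline{0}$), to apply Theorem \ref{1.3} at $\underline{n}$, and then to propagate the reduction identity to every $\underline{m}\geq\underline{n}$ by multiplying through, using the multiplicativity $I_j^{a}I_j^{b}=I_j^{a+b}$, which is special to adic filtrations (a general good filtration only satisfies $I_{\underline{n}}I_{\underline{p}}\subseteq I_{\underline{n}+\underline{p}}$).

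First I would set $\f=\{I_{\underline{n}}\}_{\underline{n}\in\N^s}$ with $I_{\underline{n}}=I_1^{n_1}\cdots I_s^{n_s}$ and observe that $I_{e_i}=I_i$, so that the Rees algebra $\mathcal{R}(\f)=\bigoplus_{\underline{n}\in\N^s}I_1^{n_1}\cdots I_s^{n_s}\,\underline{t}^{\underline{n}}$ coincides with $\mathcal{R}(I_{e_1},\ldots,I_{e_s})$; in particular it is a cyclic, hence finite, module over $\mathcal{R}(I_{e_1},\ldots,I_{e_s})$, so $\f$ is a good filtration. Reducing modulo $\m$ in each degree gives $F=F(\f)=F(I_{e_1},\ldots,I_{e_s})=G$, so $F$ is generated over $G$ in degree $\underline{0}$ and one may take $\underline{a}=\underline{0}$, whence $\underline{a}+\underline{1}=\underline{1}$.

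Next, assuming $\underline{n}\geq\underline{1}$, I would invoke Theorem \ref{1.3}: since $\mu(I_{\underline{n}})<\prod_{i=1}^s\binom{n_i+r_i}{r_i}$, it furnishes general elements $x_{i1},\ldots,x_{ir_i}\in I_{e_i}=I_i$ for $1\leq i\leq s$ with
\[ I_1^{n_1}\cdots I_s^{n_s}=\sum_{i=1}^s(x_{i1},\ldots,x_{ir_i})\,I_1^{n_1}\cdots I_i^{n_i-1}\cdots I_s^{n_s}. \]
For $\underline{m}\geq\underline{n}$ I would multiply this identity by $I_1^{m_1-n_1}\cdots I_s^{m_s-n_s}$: the left side becomes $I_1^{m_1}\cdots I_s^{m_s}$ and the $i$-th summand on the right becomes $(x_{i1},\ldots,x_{ir_i})\,I_1^{m_1}\cdots I_i^{m_i-1}\cdots I_s^{m_s}$, which gives the desired equality, the reverse inclusion being immediate from $x_{ij}\in I_i$. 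No fresh appeal to genericity is needed: the elements produced at $\underline{n}$ serve for all $\underline{m}\geq\underline{n}$ simultaneously.

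Finally I would dispose of the case where $T:=\{i:n_i=0\}$ is nonempty; here $T\neq\{1,\ldots,s\}$ because $n_1+\cdots+n_s\geq1$, and $\binom{n_i+r_i}{r_i}=1$ for $i\in T$, so the hypothesis reduces to $\mu\big(\prod_{i\notin T}I_i^{n_i}\big)<\prod_{i\notin T}\binom{n_i+r_i}{r_i}$ with $(n_i)_{i\notin T}\geq\underline{1}$. Applying the case treated above to the family $(I_i)_{i\notin T}$ and choosing the $x_{i1},\ldots,x_{ir_i}\in I_i$ for $i\in T$ as arbitrary general elements, I would obtain the identity for all $\underline{m}\geq\underline{n}$ (for $i\in T$ the summand $(x_{i1},\ldots,x_{ir_i})I_1^{m_1}\cdots I_i^{m_i-1}\cdots I_s^{m_s}$ read as $0$ when $m_i=0$; it is contained in $I_1^{m_1}\cdots I_s^{m_s}$ when $m_i\geq1$, hence harmless, and vacuous when $r_i=0$). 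The only place where care is needed is this bookkeeping together with the fact that $\underline{a}=\underline{0}$ here; otherwise the corollary is a straightforward consequence of Theorem \ref{1.3}, and I do not foresee a genuine obstacle.
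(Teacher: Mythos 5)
Your proof is correct and matches the paper's intended derivation, which is given without detail (the corollary is presented as an immediate consequence of Theorem~\ref{esthm-mul-fil}). Your three observations---that $F(\f)=G$ for the adic filtration forces $\underline{a}=\underline{0}$ and hence $\underline{a}+\underline{1}=\underline{1}$, that multiplying the equality at $\underline{n}$ by $I_1^{m_1-n_1}\cdots I_s^{m_s-n_s}$ (legitimate here precisely because $I_j^aI_j^b=I_j^{a+b}$) propagates it to every $\underline{m}\geq\underline{n}$ with the same Zariski-open set of choices, and that indices with $n_i=0$ are absorbed by passing to the sub-family $(I_i)_{i\notin T}$---supply exactly the missing bookkeeping.
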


Example \ref{counter} illustrates how Corollary \ref{cor} improves O'Carroll's generalisation of the EST for joint reductions. The motivation comes from the following observation. Let $I_1,I_2$ be ideals in a $2$-dimensional ring. Let $x_{11}, x_{12} \in I_1$ and $x_{21},x_{22} \in I_2$ such that under the hypothesis of Carroll's result, we get $(x_{11}x_{21},x_{12}x_{22})I_1^{n-1}I_2^{n-1} = I_1^n I_2^n.$ This gives a joint reduction equation of the form $x_{11}I_1^{n-1}I_2^n+x_{22}I_1^nI_2^{n-1}=I_1^n I_2^n.$ Whereas, the joint reduction equation obtained from Corollary \ref{cor}, in this case, is of the form $x_{11}I_1^{n_1-1}I_2^{n_2} + x_{22}I_1^{n_1} I_2^{n_2-1} = I_1^{n_1} I_2^{n_2},$ where $n_1$ may not be equal to $n_2.$

In Section 4, we consider an analogue of the EST to estimate the  reduction number of a good $\N$-graded filtration. For this  we need depth conditions on the associated graded ring and  the fiber cone.
In addition, we need the notion of equimultiple filtration: 

\begin{definition}
	Let $\f$ be an $\N$-graded good filtration. Define $l(\f) = \dim(F(\f))$ to be the analytic spread of the filtration $\f$. We say that the filtration $\f$ is \emph{equimultiple} if $l(\f) = \hgt I_1$.
\end{definition}

\begin{theorem}
	Let $(R, \m)$ be a Cohen-Macaulay  local ring with $R/\m=k$ infinite. Let $\f=\{I_n\}_{n \in \N}$ be an equimultiple good filtration such that $\grade\gr_{\f}(R)_+ \geq l(\f)=r$ and $F(\f)$ is Cohen-Macaulay. Let $\mu(I_n)< \binom{n+r}{r}$ for some $n\ge 1$. Then there exist $r$ general elements $x_1, \ldots, x_r \in I_1$ such that $I_m = (x_1,\ldots,x_r)I_{m-1}$ for all $m \geq n$. 
\end{theorem}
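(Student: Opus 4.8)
The plan is to produce the required elements as $r$ general elements $x_1,\dots,x_r\in I_1$, set $J=(x_1,\dots,x_r)$, and route the two depth hypotheses through two separate regular‑sequence mechanisms. Since $k$ is infinite and $\dim F(\f)=l(\f)=r$, I would first arrange that the images $\bar x_1,\dots,\bar x_r$ of the $x_i$ in $F(\f)_1=I_1/\m I_1$ form a homogeneous system of parameters of $F(\f)$; as $F(\f)$ is Cohen--Macaulay of dimension $r$, these then form a regular sequence, $S:=k[\bar x_1,\dots,\bar x_r]$ is a polynomial ring, and $F(\f)$ is a finite free graded $S$-module, say $F(\f)\cong\bigoplus_i S(-a_i)$. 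Writing $h_j:=\#\{i:a_i=j\}$ for the $h$-vector of $F(\f)$, one then has, for every $m\ge 0$,
\[
\mu(I_m)=H_{F(\f)}(m)=\sum_{j\ge 0}h_j\binom{m-j+r-1}{r-1},
\]
while $F(\f)/(\bar x_1,\dots,\bar x_r)F(\f)\cong\bigoplus_i k(-a_i)$ has Hilbert function $m\mapsto h_m$. Since $\bigl((\bar x_1,\dots,\bar x_r)F(\f)\bigr)_m$ equals the image of $JI_{m-1}$ in $F(\f)_m=I_m/\m I_m$, this gives $h_m=\dim_k I_m/(JI_{m-1}+\m I_m)$, so that by Nakayama $h_m=0$ if and only if $I_m=JI_{m-1}$, and $h_0=\dim_kF(\f)_0=1$. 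Separately, since $\grade\gr_{\f}(R)_+\ge r$, I would take the $x_i$ to be superficial for $\f$, so that their leading forms $x_1^{*},\dots,x_r^{*}\in\gr_{\f}(R)_1$ form a regular sequence on $\gr_{\f}(R)$; the Valabrega--Valla intersection criterion (in the form valid for $\N$-graded good filtrations) then yields $J\cap I_m=JI_{m-1}$ for all $m\ge1$. Each of these requirements on $x_1,\dots,x_r$ holds on a dense open set, so a single general choice satisfies them all.

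The first step of the argument proper is to observe that the $h$-vector of $F(\f)$ has no internal zeros. Indeed $J\cap I_m=JI_{m-1}$ forces $I_m\subseteq J\iff I_m=J\cap I_m=JI_{m-1}$, i.e.\ $h_m=0\iff I_m\subseteq J$; and since $\{I_m\}$ is a decreasing filtration, $I_n\subseteq J$ implies $I_m\subseteq J$ for all $m\ge n$. Hence $h_n=0$ implies $h_m=0$ for all $m\ge n$; as $\bigoplus_i k(-a_i)$ is Artinian (so $h_j=0$ for $j\gg0$) and $h_0=1$, it follows that $\{j:h_j\ne0\}=\{0,1,\dots,s\}$ for some $s\ge0$.

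To finish, assume $\mu(I_n)<\binom{n+r}{r}$. Using the hockey‑stick identity $\binom{n+r}{r}=\sum_{j=0}^{n}\binom{n-j+r-1}{r-1}$ together with $h_0=1$, one gets
\[
0<\binom{n+r}{r}-\mu(I_n)=\sum_{j=1}^{n}(1-h_j)\binom{n-j+r-1}{r-1},
\]
where every coefficient $\binom{n-j+r-1}{r-1}$ with $1\le j\le n$ is strictly positive. If the top degree $s$ of the $h$-vector were $\ge n$, then by the previous step $h_j\ge1$ for all $1\le j\le n$, making the right‑hand side $\le0$, a contradiction. Hence $s\le n-1$, that is $h_m=0$ (equivalently $I_m=JI_{m-1}$) for all $m\ge n$, which is the assertion.

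The step I expect to require the most care is the collection of genericity facts in the first paragraph: that a sufficiently general sequence $x_1,\dots,x_r\in I_1$ is at once a homogeneous system of parameters on $F(\f)$ and a superficial sequence whose leading forms are a regular sequence on $\gr_{\f}(R)$ (this last point being exactly where $\grade\gr_{\f}(R)_+\ge l(\f)$ enters), together with the verification that the Valabrega--Valla criterion $J\cap I_m=JI_{m-1}$ transfers to $\N$-graded good filtrations. The Cohen--Macaulayness of $R$ and equimultiplicity enter through $\dim F(\f)=\hgt I_1=r$, which is what allows $r$ elements of $I_1$ to play the role of a system of parameters for $F(\f)$ and to behave as a regular sequence; once these structural inputs are in hand, the remainder is the elementary binomial computation above.
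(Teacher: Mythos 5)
Your proof is correct, and it takes a genuinely different route from the paper's. The paper runs a double induction on $(r,n)$ by contradiction: it mods out by a single superficial element $a$, computes $\mu(\overline{I_n})=\mu(I_n)-\mu(I_{n-1})$ using \cite[Lemma 2.1, Prop.\ 3.5]{huckabaMarley} and \cite[Thm.\ 2.8]{cortadellasZar}, splits into two cases according to whether $\mu(I_{n-1})<\binom{n-1+r}{r}$, and in the second case descends to $\overline{R}$ with $r-1$ and lifts back via Lemma \ref{lem1}. You instead choose all $r$ general elements at once, exploit Cohen--Macaulayness of $F(\f)$ to write it as a graded free module $\bigoplus_i S(-a_i)$ over the polynomial subring $S=k[\bar x_1,\dots,\bar x_r]$, and translate the problem into a statement about the $h$-vector: $h_m=0\iff I_m=JI_{m-1}$, and the Valabrega--Valla intersection $J\cap I_m=JI_{m-1}$ (the role played here by $\grade\gr_{\f}(R)_+\ge r$) forces the $h$-vector to have no internal zeros; the hockey-stick identity then converts $\mu(I_n)<\binom{n+r}{r}$ directly into $h_m=0$ for $m\ge n$. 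Your approach is structurally cleaner and more informative: it avoids the case split and makes visible that the $J$-reduction number of $\f$ equals the top degree of the $h$-vector of $F(\f)$, a fact the paper's inductive argument never isolates. The trade-off is that you need the full regular-sequence form of the Valabrega--Valla criterion for good filtrations (the paper only ever invokes the single-element version from \cite{huckabaMarley} and iterates), and you should also note that the binomial computation implicitly assumes $r\ge 1$ (for $r=0$ the coefficients $\binom{n-j-1}{-1}$ degenerate), so the $r=0$ edge case is best dispatched directly as the paper does. Both proofs ultimately rest on the same two mechanisms---CM of $F(\f)$ giving a regular hsop and the depth of $\gr_{\f}(R)$ giving the intersection property---but you deploy them globally rather than one element at a time.
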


Finally in Section 5, we present several examples which illustrate our results and explain the necessity of depth assumption for the EST for reduction number of good filtrations. 

{\bf Acknowledgements:} We thank the referee for a very careful reading of the manuscript and suggesting several improvements.

\section{Preliminaries}

In this section, we setup notation, recall definitions and results which are required in the subsequent sections.

\s {\bf Multi-graded filtrations of ideals}

Let $(R,\m)$ be a $d$-dimensional Noetherian local ring and $I_1, \ldots, I_s$ be ideals in $R$. For $s\geq 1,$ we put $\underline{0}=(0,\ldots,0)\in{\N}^s$, $\underline{1}=(1,\ldots,1)\in{\N}^s$ and $e_i=(0,\ldots,1,\ldots,0)\in{\N}^s$ where $1$ occurs at the $i$-th position. Let $\nn=(n_1,\ldots,n_s)\in{\N}^s,$ then we write $\underline{I}^{\nn}=I_{1}^{n_1}\cdots I_{s}^{n_s}$ and we put $|\nn|=n_1+\cdots+n_s.$ By the phrase ``for all large $\nn$" we mean $n_i\gg 0$ for all $i= 1, \ldots, s$. 

\begin{definition}
	A set of ideals $\f=\{I_{\nn}\}_{\nn \in \N^s}$ is called an $\N^s$-{\it graded filtration} if for all $\mm,\nn\in\N^s, I_{\nn}I_{\mm}\subseteq I_{\nn+\mm}$ and if $\mm\geq\nn,$ $I_{\mm}\subseteq I_{\nn}$. Moreover, $\f$ is called an $\N^s$-graded {\it{$\underline{I}=(I_1, \ldots, I_s)$-filtration}} if $\underline{I}^{\nn}\subseteq I_{\nn}$ for all $\nn\in\N^s$.
\end{definition}

Let $t_1,\ldots,t_s$ be indeterminates. For $\nn\in\N^s,$ put $\underline t^{\nn}=t_{1}^{n_1}\cdots t_{s}^{n_s}$ and denote the $\N^s$-graded {\it{Rees ring of $\f$}} by $\mathcal{R}(\f)=\bigoplus\limits_{\nn \in \N^s}
I_{\nn}~{\underline{t}}^{\nn}$.
For $\f=\{\II^{\nn}\}_{\nn \in \N^s}$, we set $\mathcal R(\II)=\mathcal R(\f)$. 
The fiber cone of the filtration $\f$ is denoted by $F(\f)=\mathcal{R}(\f) \otimes_R R/\m= \bigoplus\limits_{\nn \in \N^s}
I_{\nn}/\m I_{\nn}$. Define $l(\f) = \dim F(\f)$ to be the {\it analytic spread} of the filtration $\f$. We say 
$F=\bigoplus_{\nn \in \N^s} F_{\nn}$ is {\it standard $\N^s$- graded algebra over $k$} if $F=k[F_{e_1}, \ldots, F_{e_s}]$.

\begin{definition}
	An $\N^s$-graded filtration $\f=\lbrace I_{\nn}\rbrace_{\nn\in \N^s}$ of ideals in $R$ is called an $\underline{I}=(I_1, \ldots, I_s)$-{\emph{good filtration}} if $I_i \subseteq I_{e_i}$ for all $i=1,\ldots,s$ and $\mathcal{R}(\f)$ is a finite $\mathcal{R}(\II)$-module.
\end{definition}
\noindent
If $R$ is an analytically unramified local ring and $I$ is an ideal of $R$, then Rees \cite{reesAU} proved that the integral closure filtration $\f=\{\overline{I^n}\}$ is an $I$-good filtration. Using \cite{GMV}, under the same conditions, the tight closure filtration $\mathcal{T} = \{(I^n)^*\}$ is an $I$-good filtration. 

A \emph{reduction} of a good filtration $\f = \{I_n\}$ is an ideal $J \subseteq I_1$ such that $JI_n = I_{n+1}$ for all large $n.$ Equivalently, $J \subseteq I_1$ is a reduction of $\f$ if and only if $\R(\f)$ is a finite $\R(J)$-module. A minimal reduction of $\f$ is a reduction of $\f$ minimal with respect to containment. Minimal reductions of a good filtration always exist and are generated by $l(I_1)$ elements if the residue field is infinite.

\begin{remarks}
	(1) Let $\mathcal{G}=\{J_n\}_{n \geq 0}$ be a $J$-good filtration. Then $J_{n+1}=JJ_n$ for all large $n.$ Since $J \subseteq J_1,$ it follows that $J_{n+1}=JJ_n \subseteq J_1 J_n \subseteq J_{n+1}$ and hence $J_{n+1}=J_1J_n$ for all large $n$. This shows that $\mathcal{G}$ is also a $J_1$-good filtration. 
	Some basic facts on good filtration are given in the paper \cite{hoaZarzuela}.

	(2) Let $\f=\{I_{\underline{n}}\}_{\underline{n}\in \N^s}$ be an $\N^s$-graded good filtration. Then $\mathcal{R}(\f)$ is a finite $\mathcal{R}(\II)$-module by definition, where $\II=(I_{e_1},\ldots,I_{e_s}).$ Set $G=F(I_{e_1}, \ldots, I_{e_s})= \bigoplus_{\underline{m} \in \N^s} I_{e_1}^{m_1}\cdots I_{e_s}^{m_s}/\m I_{e_1}^{m_1}\cdots I_{e_s}^{m_s}$, the fiber cone of the ideals $I_{e_1}, \ldots, I_{e_s}$ and $F=F(\f)= \bigoplus_{\underline{m} \in \N^s} I_{\underline{m}}/\m I_{\underline{m}}$ the fiber cone of the filtration $\f$. Note that $G$ is a standard multi-graded $k$-algebra and $F$ is a finitely generated $G$-module. Set $G^{(i)}= \bigoplus_{m \geq 0} G_{me_i}$ for all $1 \leq i \leq s$. Clearly $G^{(i)}$ is a standard graded $k$-algebra with $G^{(i)}_1=G_{e_i}$.

	(3) Let $\f=\{I_{\nn}\}_{\nn \in \N^s}$ be a good filtration. Then $\f_i=\{I_{ne_i}\}_{n \ge 0}$ is a good filtration for all $i=1,\ldots,s.$ For, it is sufficient to show that $\R(\f_i)$ is a finite $\R(I_{e_i})$-module. Consider the ideal 
	\[\I_{(i)} = \bigoplus_{\nn \in \N^s \setminus \N e_i} \ I_{\nn}~\underline{t}^{\nn} \ \subseteq \ \R(\f).\]
	Observe that 
	\[\I_{(i)} \cap \R(\II) =  \bigoplus_{\nn \in \N^s \setminus \N e_i} \ I_{e_1}^{n_1}\cdots I_{e_s}^{n_s}~\underline{t}^{\nn} .\] 
	As $\R(\f)$ is a finite $\R(\II)$-module, it follows that $\R(\f)/\I_{(i)}$ is a finite $\R(\II)/(\I_{(i)} \cap \R(\II))$-module. Since $\R(\f)/\I_{(i)} \simeq \R(\f_i)$ and $\R(\II)/(\I_{(i)} \cap \R(\II)) \simeq \R(I_{e_i})$, we are done.
\end{remarks}

\s {\bf Zariski topology}

Let $V$ be a finite dimensional $k$-vector space and $\dim_k V=N$. Then we can identify any vector $v \in V$ with an element of $k^N$. By a (non-empty) Zariski-open set in $V^r$, for $r \in \N$, we mean a finite union of sets of the form $X_f:=\{{\bf a}\in k^{Nr}\mid f({\bf a})\neq 0 \}$ for a given non-zero polynomial $f \in k[X_1, \ldots, X_{Nr}]$, see \cite[Exercise 17]{atiyahMacd}. Since for non-zero polynomials $f$ and $g$ in $k[X_1, \ldots, X_{Nr}]$, $X_f \cap X_g = X_{fg}$ so finite intersection of any two non-empty open set in $V^r$ is non-empty. In fact, intersection of finitely many non-empty open sets in $V^r$ is non-empty.

\begin{remark}\label{rmk2} 
	Let $U$ be a subspace of $V$ with $\dim_k U=m$ and $\dim_k V=n$. Note that $\dim_k V/U=n-m=t$ (say)  and $m, t \leq n$. We claim that the quotient map $\pi: V \to V/U$ is continuous in Zariski topology. Since $X_{f}$ is a basic open set of $V/U$ for some $f \in k[X_1, \ldots, X_t]$, it is enough to show that $\pi^{-1}(X_f)$ is a Zariski-open subset of $V$. Notice $\pi^{-1}(X_f) \simeq X_f \times k^m$ is a basic open subset of $V$ defined by $f \in k[X_1, \ldots, X_t]\subseteq k[X_1, \ldots, X_n]$. 
\end{remark}

\s {\bf General elements}

Let $F = \oplus_{n \geq 0} F_n$ be a standard graded algebra over a field $k$ and suppose that there exists a $k$-vector space epimorphism $\phi: V \to F_1$. Let $\mathcal{P}$ be a property of elements of $F_1$ and let $r \geq 0$. We say that {\it $\mathcal{P}$ holds for $r$ general elements} $y_1,\ldots, y_r$ of $F_1$ if there exists a non-empty Zariski-open subset $U$ of $V^r$ such that $\mathcal{P}$ holds for every sequence of elements $y_j := \phi(v_j )$ where $(v_1,\ldots, v_r )\in U$.

Let $(R,\m,k)$ be a Noetherian local ring and $I$ be an ideal in $R$. Set $F(I)=\bigoplus_{n \geq 0}I^n/\m I^n$, the fiber cone of $I$. We say $x$ is a ``general" element in $I$ if $x$ is a general element in $F(I)_1$, see \cite[Remark 3.2]{carroll}.

\section{Eakin-Sathaye theorem for multi-graded filtration}

In this section, we generalize the Eakin-Sathaye theorem for multi-graded good filtrations. We prove a lemma first. Set $F_{\underline{n}}=0$ when $\underline{n} \notin \mathbb{N}^s.$

\begin{lemma}\label{lem1}
	Let $G=\bigoplus_{\underline{m} \in \N^s} G_{\underline{m}}$ be a standard $\N^s$-graded algebra over a field $k$. Let $F= \bigoplus_{\underline{n} \in \N^s} F_{\underline{n}}$ be an $\N^s$-graded $k$-algebra and a finitely generated $G$-module such that $G_{\underline{0}}=F_{\underline{0}}=k$ and $G_{e_i}=F_{e_i}$ for all $i=1, \ldots, s$. 
	Let $y_{i1}, \ldots, y_{ip_i}\in G_{e_i}$ be a basis of $G_{e_i}$. Let $y=y_{ij}$ for some $1\leq i \leq s$ and $1 \leq j \leq p_i$. Set $\overline{G}=G/yG =\oplus_{\nn \in \N^s} \overline{G}_{\nn}$ and $\overline{F}=F/yF=\oplus_{\nn \in \N^s} \overline{F}_{\nn}$.  
	Let for some $r_i \geq 2$ there exist general elements $\overline{a_1}, \ldots, \overline{a_{r_i-1}} \in \overline{G}_{e_i}$ and $\overline{b_{t1}}, \ldots, \overline{b_{tr_t}} \in \overline{G}_{e_t}$ for all $t \neq i$ such that 
	\[	\overline{F}_{\nn}= (\overline{a_1}, \ldots, \overline{a_{r_i-1}}) \overline{F}_{\nn-e_i}+ \sum_{\substack{t=1 \\ t\neq i}}^{s} (\overline{b_{t1}}, \ldots, \overline{b_{tr_t}})\overline{F}_{\nn-e_t}.\]
	Then $y, a_1, \ldots, a_{r_i-1}$ are general elements in $G_{e_i}$ and $b_{t1}, \ldots, b_{tr_t}$ are general elements in $G_{e_t}$ for all $t \neq i$ such that \[F_{\nn}= (y, a_1, \ldots, a_{r_i-1}) F_{\nn-e_i}+ \sum_{\substack{t=1 \\ t\neq i}}^{s} (b_{t1}, \ldots, b_{tr_t})F_{\nn-e_t}.\] 
\end{lemma}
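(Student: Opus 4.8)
The plan is to lift the given "reduction" equation from $\overline{F} = F/yF$ back to $F$, using Nakayama's lemma to handle the extra generator $y$, and then to verify that the genericity of the lifted elements is preserved.

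First I would address the genericity statement. By Remark \ref{rmk2}, the quotient maps $\pi_i \colon G_{e_i} \to \overline{G}_{e_i} = G_{e_i}/(yG)_{e_i}$ (for $t = i$; for $t \neq i$ one has $\overline{G}_{e_t} = G_{e_t}$ since $y \in G_{e_i}$ and $i \neq t$) are continuous in the Zariski topology. Hence for any nonempty Zariski-open set $U$ of $\overline{G}_{e_i}^{\,r_i-1}$ witnessing that $\overline{a_1}, \dots, \overline{a_{r_i-1}}$ are general, the preimage $\pi_i^{-1}(U)$ is a nonempty Zariski-open subset of $G_{e_i}^{\,r_i-1}$. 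Choosing lifts $a_1, \dots, a_{r_i-1} \in G_{e_i}$ of $\overline{a_1}, \dots, \overline{a_{r_i-1}}$ inside this preimage, the tuple $(y, a_1, \dots, a_{r_i-1})$ lies in $\{y\} \times \pi_i^{-1}(U)$; since finite intersections of nonempty Zariski-open sets are nonempty (as noted in the Zariski-topology subsection), $y, a_1, \dots, a_{r_i-1}$ are general elements of $G_{e_i}$, and the $b_{tj}$ remain general in $G_{e_t}$ for $t \neq i$ because nothing is changed there.

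Next, the main content: lifting the ideal equation. The hypothesis reads, for all $\underline{n}$,
\[
\overline{F}_{\underline{n}} = (\overline{a_1}, \dots, \overline{a_{r_i-1}})\overline{F}_{\underline{n}-e_i} + \sum_{\substack{t=1 \\ t \neq i}}^{s} (\overline{b_{t1}}, \dots, \overline{b_{tr_t}})\overline{F}_{\underline{n}-e_t}.
\]
Reducing modulo $yF$ (and using $\overline{F}_{\underline{n}-e_i} = F_{\underline{n}-e_i}/(yF)_{\underline{n}-e_i}$ etc.), this says precisely
\[
F_{\underline{n}} = (y, a_1, \dots, a_{r_i-1})F_{\underline{n}-e_i} + \sum_{\substack{t=1 \\ t \neq i}}^{s} (b_{t1}, \dots, b_{tr_t})F_{\underline{n}-e_t} + y F_{\underline{n}},
\]
since $(yF) \cap F_{\underline{n}} = yF_{\underline{n}-e_i} \subseteq (y,a_1,\dots,a_{r_i-1})F_{\underline{n}-e_i}$. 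Set $M_{\underline{n}} := (y, a_1, \dots, a_{r_i-1})F_{\underline{n}-e_i} + \sum_{t \neq i}(b_{t1},\dots,b_{tr_t})F_{\underline{n}-e_t}$, a submodule of $F_{\underline{n}}$; we have $F_{\underline{n}} = M_{\underline{n}} + yF_{\underline{n}}$, and $y \in G_{e_i}$ is in the graded maximal ideal of $G$. The remaining obstacle is that Nakayama's lemma is not immediately available: $F_{\underline{n}}$ is a $k$-vector space (finite-dimensional), not obviously finitely generated over a local ring in a way that lets us cancel $y$. I would resolve this by passing to the single-grading direction $e_i$ and working over the standard graded $k$-algebra $G^{(i)} = \bigoplus_{m\geq 0} G_{me_i}$ (Remark, part (2)): fix the off-$i$ coordinates, and view $\bigoplus_{m} F_{\underline{n}_0 + m e_i}$ (for a fixed $\underline{n}_0$ with $i$-th coordinate zero) as a finitely generated graded $G^{(i)}$-module — finiteness comes from $F$ being a finitely generated $G$-module. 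Then $y$ lies in the irrelevant maximal ideal $\mathfrak{m}_{G^{(i)}}$ of $G^{(i)}$, and the equation $F_{\underline{n}} = M_{\underline{n}} + (yF)_{\underline{n}}$ in each graded piece, summed over $m$, gives a graded module $N = M + yF$-type identity to which graded Nakayama applies, yielding $F_{\underline{n}} = M_{\underline{n}}$ for all $\underline{n}$ — which is the desired conclusion.

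The step I expect to be the main obstacle is exactly this last one: setting up the correct finitely generated graded module over $G^{(i)}$ (or over $G$ itself, with an appropriate shift to absorb the fixed off-$i$ degrees) so that graded Nakayama legitimately cancels the factor $y$, taking care that the relation $(yF)\cap F_{\underline{n}} = yF_{\underline{n}-e_i}$ holds (this uses that $y$ is a nonzerodivisor-like element, or more simply just that $yF$ is the image of $F(-e_i)\xrightarrow{y}F$, so its $\underline{n}$-component is $yF_{\underline{n}-e_i}$ by definition, which needs no hypothesis). Everything else — continuity of the quotient maps, nonemptiness of intersections of Zariski-opens, and the bookkeeping of which $\overline{G}_{e_t}$ equal $G_{e_t}$ — is routine given the preliminaries.
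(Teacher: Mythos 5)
Your lifting step contains a genuine error. You write the lifted equation as
\[
F_{\nn} = (y, a_1, \dots, a_{r_i-1})F_{\nn-e_i} + \sum_{\substack{t=1 \\ t \neq i}}^{s} (b_{t1}, \dots, b_{tr_t})F_{\nn-e_t} + y F_{\nn},
\]
and then devote the rest of the write-up to a graded Nakayama argument over $G^{(i)}$ designed to cancel the extra summand $yF_{\nn}$. But no such summand appears: since $y$ is homogeneous of degree $e_i$, the $\nn$-graded piece of the kernel of $F\twoheadrightarrow F/yF$ is $(yF)_{\nn}=yF_{\nn-e_i}$, not $yF_{\nn}$ (the latter lives in degree $\nn+e_i$, so the displayed equation is not even a statement inside a single graded piece). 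With the correct term in place, the kernel contribution $yF_{\nn-e_i}$ is already contained in $(y,a_1,\dots,a_{r_i-1})F_{\nn-e_i}$, and lifting the hypothesis gives $F_{\nn}=(y,a_1,\dots,a_{r_i-1})F_{\nn-e_i}+\sum_{t\neq i}(b_{t1},\dots,b_{tr_t})F_{\nn-e_t}$ immediately, with nothing to cancel: no Nakayama, no passage to $G^{(i)}$-modules. You in fact record the correct identity at the very end (``its $\underline{n}$-component is $yF_{\underline{n}-e_i}$ by definition, which needs no hypothesis'') without noticing that it makes your entire Nakayama detour vacuous. The paper settles this at the outset by writing $\overline F=\bigoplus_{\nn}F_{\nn}/yF_{\nn-e_i}$ and $\overline G=\bigoplus_{\nn}G_{\nn}/yG_{\nn-e_i}$, after which the lift is transparent and all the real work goes into the genericity bookkeeping.

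A secondary issue: your genericity argument ends with the claim that, because $(y,a_1,\dots,a_{r_i-1})$ lies in $\{y\}\times\pi_i^{-1}(U)$ and finite intersections of nonempty Zariski-opens are nonempty, the tuple is general. That is a non-sequitur: $\{y\}\times\pi_i^{-1}(U)$ is not a Zariski-open subset of $V_1^{r_i}$, so you have not exhibited the open set that the definition of ``general'' requires. The paper instead observes that replacing $y$ by $cy$ for $0 \neq c \in k$ leaves $\overline G$ and $\overline F$ unchanged, and works with a set of the form $(kz\setminus\{0\}) \times U$. Either way, the point worth making explicit is that the locus of tuples in $V_1^{r_i}$ for which the generation statement holds is Zariski open (a rank condition) and contains $(z,v_1,\dots,v_{r_i-1})$ for $(v_1,\dots,v_{r_i-1})\in U$, hence is nonempty.
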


\begin{proof}
	Note that $\overline{G}= \bigoplus_{\nn \in \N^s}G_{\nn}/yG_{\nn-e_i}$ and $\overline{F}= \bigoplus_{\nn \in \N^s}F_{\nn}/yF_{\nn-e_i}$. Hence $\overline{G}_{e_i}=G_{e_i}/ky$ and $\overline{G}_{e_t}=G_{e_t}$ for all $t \neq i$.
	Let $V= V_1 \times \cdots \times V_s$ be a Cartesian product of finite-dimensional $k$-vector spaces $V_i$'s over an infinite field $k$ such that there is a $k$-epimorphism $\psi: V \to G_{e_1} \oplus \cdots \oplus G_{e_s}$ induced by the $k$-epimorphisms $\psi_i: V_i \to G_{e_i}$ for $1 \leq i \leq s$. For convenience we can assume that $i=1=j$, i.e., $y=y_{11}$. Clearly $\psi_i$ induces $\overline{\psi_i}: V_i \to \overline{G}_{e_i}$ for all $i$. Note that $\overline{\psi_i}=\psi_i$ for all $i \neq 1$. 
	By definition of general elements there exists a non-empty Zariski-open subset $U$ of $V_1^{r-1}$ such that for all $(a_1, \ldots, a_{r-1})\in \psi_1(U)$,
	\begin{equation}\label{eq0}
	\overline{F}_{\nn}= (\overline{a_1}, \ldots, \overline{a_{r_1-1}}) \overline{F}_{\nn-e_1}+ \sum_{t=2}^{s} (\overline{b_{t1}}, \ldots, \overline{b_{tr_t}})\overline{F}_{\nn-e_t}
	\end{equation}
	holds. 
	Let $z \in V_1$ such that $\psi_1(z)=y$ (as $y \neq 0$ so $z \neq 0$). Set $U'=kz \backslash \{0\}$. Then $U'$ is a non-empty Zariski open subsets of $V_1$. So by Remark \ref{rmk2}, $U'\times V_1^{r-1}$ and $V_1 \times U$ are non-empty Zariski open subsets of $V_1 \times V_1^{r-1} \simeq V_1^r$ and hence $(U' \times V_1^{r-1}) \cap (V_1 \times U)= U' \times U$ is a non-empty Zariski open subset of $V_1^r$. Note that $G/cyG=\overline{G}$ and $F/cyF=\overline{F}$ for any $0 \neq c \in k.$ Thus if we replace $y$ by $cy$ for any $0 \neq c \in k$, then also \eqref{eq0} holds. This implies that for any $(cy,a_1,\ldots,a_{r-1}) \in \psi_1(U' \times U)$, 
	\begin{equation}\label{eq00}
	F_{\nn}= (cy, a_1, \ldots, a_{r_1-1}) F_{\nn-e_1} + \sum_{t=2}^{s} (b_{t1}, \ldots, b_{tr_t})F_{\nn-e_t},
	\end{equation}  
	holds. Hence $cy, a_1, \ldots, a_{r_1-1} \in G_{e_i}$  are general elements. For $ 2 \le t \le s,$ again by the definition of general elements there exists a non-empty Zariski-open subset $U_t$ of $V_t^{r}$ such that for all $(\overline{b_{t1}}, \ldots, \overline{b_{tr_t}}) \in \overline{{\psi_t}(U_t)}$, \eqref{eq0} holds which implies that for all $(b_{t1}, \ldots, b_{tr_t}) \in \psi_t(U_t)$, \eqref{eq00} holds. Thus $b_{t1}, \ldots, b_{tr_t}$ are $r_t$ general elements in $G_{e_t}$.
\end{proof}

\begin{theorem}\label{esthm-mul-fil}
	Let $(R, \m, k)$ be a Noetherian local ring with infinite residue field $k$ and let $\f=\{I_{\underline{n}}\}_{\nn \in \N^s}$ be an $\N^s$-graded good filtration in $R.$ 
	Suppose 
	\[\mu(I_{\underline{n}})< \binom{n_1+r_1}{r_1} \cdots \binom{n_s+r_s}{r_s}\] 
	for some integers $n_1+\cdots+n_s \geq 1$ and $r_1+\cdots+r_s \geq 0$. Let $\underline{a}$ be the maximum of the degree of the generators of $F=F(\mathcal{F})$ as a module over $G=F(I_{e_1}, \ldots, I_{e_s}),$  where the maximum is taken component-wise. If $\underline{n} \geq \underline{a}+\underline{1}$, then for all $1 \leq i \leq s$ there exist ``general" elements $x_{i1}, \dots, x_{ir_i} \in I_{e_i}$ such that \[I_{\underline{n}}= \sum_{i=1}^s (x_{i1}, \dots, x_{ir_i})I_{\underline{n}-e_i}.\]
\end{theorem}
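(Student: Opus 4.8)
The plan is to reduce the multi-graded statement to a purely graded statement about the fiber cones and then run an induction that peels off one basis element at a time, exactly in the spirit of O'Carroll's proof of the EST for complete reductions. Write $G = F(I_{e_1},\ldots,I_{e_s})$ and $F = F(\f)$ as in Remarks (2). The condition $\mu(I_{\underline n}) < \prod_i \binom{n_i+r_i}{r_i}$ says $\dim_k F_{\underline n} < \prod_i \binom{n_i+r_i}{r_i}$, and the desired conclusion $I_{\underline n} = \sum_i (x_{i1},\ldots,x_{ir_i}) I_{\underline n - e_i}$ is, by Nakayama, equivalent to the statement $F_{\underline n} = \sum_i (\bar x_{i1},\ldots,\bar x_{ir_i}) F_{\underline n - e_i}$ in the $G$-module $F$, where $\bar x_{ij} \in G_{e_i}$. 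So the whole theorem becomes: given the dimension inequality and $\underline n \ge \underline a + \underline 1$, there exist general $\bar x_{ij} \in G_{e_i}$ with $F_{\underline n} = \sum_i (\bar x_{i1},\ldots,\bar x_{ir_i}) F_{\underline n - e_i}$. The hypothesis $\underline n \ge \underline a + \underline 1$ guarantees that, after cutting by a general element $y \in G_{e_i}$, the module $\overline F = F/yF$ still has $\overline F_{\underline n}$ computed from $\overline F_{\underline n - e_i}$ via the module generators in degrees $\le \underline a$, so the inductive hypothesis stays applicable; this is precisely what makes Lemma \ref{lem1} usable as the lifting step.

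First I would set up the induction on $r := r_1 + \cdots + r_s$. The base case $r = 0$ forces $\mu(I_{\underline n}) < 1$, i.e. $I_{\underline n} = 0$, and there is nothing to prove (the empty sum equals $0$). For the inductive step, pick an index $i$ with $r_i \ge 1$. Choose a general element $y \in G_{e_i}$ and pass to $\overline G = G/yG$, $\overline F = F/yF$. The key numerical point is to check that the hypotheses of the theorem are inherited by $\overline F$ with the reduction vector $(r_1,\ldots,r_i - 1,\ldots,r_s)$: one needs $\dim_k \overline F_{\underline n} < \binom{n_i + r_i - 1}{r_i - 1}\prod_{t \ne i}\binom{n_t + r_t}{r_t}$. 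This is where a Green-type / general-hyperplane estimate enters: for a general linear form $y$, $\dim_k \overline F_{\underline n} = \dim_k(F_{\underline n}/yF_{\underline n - e_i})$, and one has $\dim_k F_{\underline n - e_i} \ge \dim_k F_{\underline n} - \dim_k \overline F_{\underline n}$, so it suffices to relate $\dim_k F_{\underline n - e_i}$ to the product of binomials with $n_i$ lowered by one; using the elementary identity $\binom{n_i + r_i}{r_i} = \binom{n_i - 1 + r_i}{r_i} + \binom{n_i - 1 + r_i}{r_i - 1}$ together with the inductive bound on $\mu(I_{\underline n - e_i})$ (valid since $\underline n - e_i \ge \underline a + \underline 1 - e_i$, which still has all coordinates $\ge 1$ in the relevant range, or handled by the $G^{(i)}$-graded-piece reduction in Remarks (2)) closes the arithmetic. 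By the inductive hypothesis applied to $\overline F$, there are general $\overline a_1,\ldots,\overline a_{r_i - 1} \in \overline G_{e_i}$ and general $\overline b_{t1},\ldots,\overline b_{tr_t} \in \overline G_{e_t}$ ($t \ne i$) with $\overline F_{\underline n} = (\overline a_1,\ldots,\overline a_{r_i-1})\overline F_{\underline n - e_i} + \sum_{t \ne i}(\overline b_{t1},\ldots,\overline b_{tr_t})\overline F_{\underline n - e_t}$.

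Now Lemma \ref{lem1} does exactly the lifting: it promotes this equation over $\overline F$ to the equation $F_{\underline n} = (y, a_1,\ldots,a_{r_i-1}) F_{\underline n - e_i} + \sum_{t \ne i}(b_{t1},\ldots,b_{tr_t}) F_{\underline n - e_t}$ over $F$, with $y, a_1,\ldots,a_{r_i-1}$ general in $G_{e_i}$ and the $b_{tj}$ general in $G_{e_t}$. Setting $x_{i1} = y$, $x_{i,k+1} = a_k$ for $1 \le k \le r_i - 1$, and $x_{tj} = b_{tj}$ for $t \ne i$, and finally lifting the $\bar x_{ij} \in G_{e_i}$ to $x_{ij} \in I_{e_i}$ (a general element of $G_{e_i}$ is by definition the image of a general element of $I_{e_i}$), Nakayama converts the fiber-cone equation back into $I_{\underline n} = \sum_i (x_{i1},\ldots,x_{ir_i}) I_{\underline n - e_i}$, completing the induction.

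The main obstacle I expect is the numerical/Green-type step: proving that cutting by a single general element $y \in G_{e_i}$ genuinely drops the relevant binomial product by the right amount, i.e. the inequality $\dim_k \overline F_{\underline n} < \binom{n_i + r_i - 1}{r_i - 1}\prod_{t \ne i}\binom{n_t + r_t}{r_t}$. Unlike the classical standard-graded EST, here $F$ is only a module over $G$ (not a standard algebra generated in its own degree-one pieces) and the grading is $\N^s$, so one must be careful that the hyperplane section behaves well in the single direction $e_i$; the hypothesis $\underline n \ge \underline a + \underline 1$ is precisely the hook that forces $F_{\underline n - e_i}$ to still be generated over $G$ from degrees within the module-generating range, which is what makes the degeneration argument go through. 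Everything else — Nakayama, the binomial identity, the lifting via Lemma \ref{lem1}, and the genericity bookkeeping via Remark \ref{rmk2} — is routine.
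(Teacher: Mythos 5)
Your plan diverges from the paper's proof at a critical point, and the divergence is a genuine gap rather than an alternative route. You set up an induction on $r=r_1+\cdots+r_s$ alone and propose that, after cutting by a \emph{general} $y\in G_{e_i}$, the key inequality $\dim_k \overline F_{\underline n}<\binom{n_i+r_i-1}{r_i-1}\prod_{t\ne i}\binom{n_t+r_t}{r_t}$ automatically holds, deferring it to ``a Green-type / general-hyperplane estimate.'' But that estimate is precisely what is \emph{not} available here: $F$ is merely a finitely generated module over the standard multigraded $k$-algebra $G$, not a standard graded algebra in its own right, and nothing forces $\dim_k yF_{\underline n-e_i}$ to be large. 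Your own arithmetic does not close: you observe $\dim_k F_{\underline n-e_i}\ge \dim_k F_{\underline n}-\dim_k\overline F_{\underline n}=\dim_k yF_{\underline n-e_i}$, which gives an \emph{upper} bound on $\dim_k yF_{\underline n-e_i}$ in terms of $\dim_k F_{\underline n-e_i}$; what you need is a \emph{lower} bound, and Pascal's identity combined with a bound on $\mu(I_{\underline n-e_i})$ does not produce one.

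The paper's proof in Proposition \ref{mul-fil-pro} is structured exactly to avoid this missing estimate. It takes a minimal counterexample (minimizing first $|\underline r|$, then $|\underline n|$) and fixes a $k$-basis $y_{i1},\ldots,y_{ip_i}$ of each $G_{e_i}$, then splits into two cases. Case 1: \emph{some particular} basis element $y_{1j}$ satisfies $\dim_k y_{1j}F_{\underline n-e_1}\ge\binom{n_1+r_1-1}{r_1}\prod_{t\ne 1}\binom{n_t+r_t}{r_t}$; only then can one conclude $\dim_k\overline F_{\underline n}<\binom{n_1+r_1-1}{r_1-1}\prod_{t\ne 1}\binom{n_t+r_t}{r_t}$, and minimality of $|\underline r|$ plus Lemma \ref{lem1} finishes. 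Case 2: \emph{every} basis element $y_{ij_i}$ has $\dim_k y_{ij_i}F_{\underline n-e_i}$ small; here the paper passes to the quotient modules $F^{(ij_i)}=F/\ann_F(y_{ij_i})$, uses minimality of $|\underline n|$ at degree $\underline n-e_i$, intersects the finitely many Zariski-open sets, and then invokes the hypothesis $\underline n\ge\underline a+\underline 1$ to write $F_{\underline n}=\sum_i\bigl(\sum_{j_i}y_{ij_i}\bigr)F_{\underline n-e_i}$ (this is the actual role of $\underline a$: it makes $F_{\underline n}$ generated in one step from $F_{\underline n-e_i}$, not the vaguer statement you ascribe to it). Your proposal contains only the Case-1 mechanism and none of Case 2, so the induction does not close. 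You would also need the $|\underline n|=1$ base via Caviglia's theorem if you were to run the double induction correctly.

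In short: Lemma \ref{lem1}, Nakayama, and the genericity bookkeeping are fine, but the reduction $F\rightsquigarrow F/yF$ does not work for a general $y$ without first establishing the Case-1 dichotomy, and the alternative Case-2 branch (annihilator quotients plus induction on $|\underline n|$) is missing from your argument.
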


Set $G=F(I_{e_1}, \ldots, I_{e_s})= \bigoplus_{\underline{m} \in \N^s} I_{e_1}^{m_1}\cdots I_{e_s}^{m_s}/\m I_{e_1}^{m_1}\cdots I_{e_s}^{m_s}=\bigoplus_{\mm \in \N^s}\II^{\mm}/\m \II^{\mm}$, the fiber cone of the ideals $I_{e_1}, \ldots, I_{e_s}$ where $\II=(I_{e_1}, \ldots, I_{e_s})$ and $F=F(\f)= \bigoplus_{\underline{m} \in \N^s} I_{\underline{m}}/\m I_{\underline{m}}$, the fiber cone of the filtration $\f$. Then $G$ is a standard $\N^s$-graded $k$-algebra and $F$ is a finitely generated $G$-module. Again $G_{e_i}=F_{e_i}$ for all $i$ and $G_{\underline{0}}=F_{\underline{0}}=k$. 
Now \[\mu(I_{\underline{n}})< \binom{n_1+r_1}{r_1}\cdots \binom{n_s+r_s}{r_s}\] implies that $\dim_k F_{\underline{n}}<\binom{n_1+r_1}{r_1}\cdots \binom{n_s+r_s}{r_s}$. Set $V_i=I_{e_i}/\m I_{e_i}$ for all $1 \leq i \leq s$ and $V= V_1 \times \cdots \times V_s$. Since $R$ is Noetherian, $V_1, \ldots, V_s$ are finite dimensional vector spaces. Note that $V=V_1 \times \cdots \times V_s= I_{e_1}/\m I_{e_1} \times \cdots \times I_{e_s}/\m I_{e_s}= I_{e_1}/\m I_{e_1} \oplus \cdots \oplus I_{e_s}/\m I_{e_s}= G_{e_1} \oplus \cdots \oplus G_{e_s}$. Using graded Nakayama Lemma, to prove the foregoing theorem it is enough to prove the following result.

\begin{proposition}\label{mul-fil-pro}
	Let $V= V_1 \times \cdots \times V_s$ be a Cartesian product of finite-dimensional $k$-vector spaces $V_1, \ldots, V_s$ over an infinite field $k$. Let $F= \bigoplus_{\underline{n} \in \N^s} F_{\underline{n}}$ be an $\N^s$-graded algebra over $k$ and be a finitely generated $G=\bigoplus_{\underline{m} \in \N^s} G_{\underline{m}}$-module which is a standard $\N^s$-graded $k$-algebra such that $G_{\underline{0}}=F_{\underline{0}}=k$ and $G_{e_i}=F_{e_i}$ 
	for all $i$. Let $\underline{a}$ be the maximum of the degree of the generators of $F$ as a $G$-module,  where the maximum is taken component-wise.
	Suppose there is a $k$-epimorphism $\psi: V \to G_{e_1} \oplus \cdots \oplus G_{e_s}$ induced by the $k$-epimorphisms $\psi_i: V_i \to G_{e_i}$ for $1 \leq i \leq s$. Further let for some integers $n_1+\cdots+n_s \geq 1$ and $r_1+\cdots+r_s \geq 0$, $\dim_k F_{\underline{n}}<\binom{n_1+r_1}{r_1}\cdots \binom{n_s+r_s}{r_s}$. If $\underline{n} \geq \underline{a}+\underline{1}$, then there exist ``general" elements $x_{i1}, \dots, x_{ir_i} \in G_{e_i}$ such that
	\[F_{\underline{n}}= \sum_{i=1}^s (x_{i1}, \dots, x_{ir_i})F_{\underline{n}-e_i}.\]
\end{proposition}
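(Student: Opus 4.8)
The plan is to prove Proposition \ref{mul-fil-pro} by induction on $N := |\underline{n}| + |\underline{r}|$, reducing at each step by one of the following moves: (i) if some $r_i = 0$, then the product $\binom{n_1+r_1}{r_1}\cdots\binom{n_s+r_s}{r_s}$ drops the $i$-th factor (which becomes $1$), and one can replace $\underline{n}$ by $\underline{n} - n_i e_i$ in the $i$-th coordinate — more precisely, reduce to the case where all $r_i \geq 1$ after checking the degenerate situations; (ii) the base case should be when $r_1 + \cdots + r_s = 0$, where the hypothesis reads $\dim_k F_{\underline{n}} < 1$, i.e.\ $F_{\underline{n}} = 0$, and the conclusion $F_{\underline{n}} = 0$ holds trivially (the empty sum of ideals times $F_{\underline{n}-e_i}$); (iii) the other base-type case is some $n_i$ small, but the hypothesis $\underline{n} \geq \underline{a} + \underline{1}$ is what we carry along, so the real induction is on $|\underline{r}|$.

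The heart of the argument is the inductive step with all $r_i \geq 1$. Here I would pick an index $i$ with $r_i \geq 1$ and a general element $y \in G_{e_i} = F_{e_i}$, form $\overline{G} = G/yG$ and $\overline{F} = F/yF$, and verify two things. First, that $\overline{F}$ is still a finitely generated $\overline{G}$-module with $\overline{G}_{\underline{0}} = \overline{F}_{\underline{0}} = k$, $\overline{G}_{e_t} = \overline{F}_{e_t}$, and that the component-wise degrees of the generators of $\overline{F}$ over $\overline{G}$ are still bounded by $\underline{a}$ (killing an element of $G_{e_i}$ cannot raise generator degrees), so $\underline{n} \geq \underline{a} + \underline{1}$ is preserved. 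Second — and this is where generality of $y$ is used — that $\dim_k \overline{F}_{\underline{n}} < \binom{n_1+r_1}{r_1}\cdots\binom{n_i + r_i - 1}{r_i - 1}\cdots\binom{n_s+r_s}{r_s}$. Since $\overline{F}_{\underline{n}} = F_{\underline{n}}/yF_{\underline{n}-e_i}$, this amounts to a lower bound on $\dim_k yF_{\underline{n}-e_i}$, equivalently an upper bound on $\dim_k (0 :_{F_{\underline{n}-e_i}} y)$; the binomial identity $\binom{m+r}{r} - \binom{m+r-1}{r-1} = \binom{m+r-1}{r}$ shows exactly what must be proved is $\dim_k F_{\underline{n}-e_i} - \dim_k(0:_{F_{\underline{n}-e_i}} y) \geq \dim_k F_{\underline{n}} - \binom{n_1+r_1}{r_1}\cdots\binom{n_i+r_i-1}{r_i-1}\cdots\binom{n_s+r_s}{r_s}$, which after rearranging reduces to showing $\dim_k yF_{\underline{n}-e_i}$ is large enough; the standard trick (as in Caviglia's approach via Green's theorem, or O'Carroll's) is that for general $y$ the multiplication map $F_{\underline{n}-e_i} \to F_{\underline{n}}$ has image of the generic (maximal) dimension, and a short counting lemma on the Hilbert function of a standard graded module then gives the estimate. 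I expect this dimension count — choosing $y$ generic so that $y F_{\underline{n}-e_i}$ is as large as possible and then extracting the binomial inequality — to be the main obstacle, and it is presumably handled by an auxiliary lemma (a Green-type or Macaulay-type bound) that the authors prove or cite.

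With the numerical hypothesis re-established for $(\overline{F}, \overline{G})$ and the smaller parameter vector $(\underline{n}; r_1, \ldots, r_i - 1, \ldots, r_s)$, the inductive hypothesis yields general elements $\overline{a_1}, \ldots, \overline{a_{r_i-1}} \in \overline{G}_{e_i}$ and $\overline{b_{t1}}, \ldots, \overline{b_{tr_t}} \in \overline{G}_{e_t}$ ($t \neq i$) with
\[
\overline{F}_{\underline{n}} = (\overline{a_1}, \ldots, \overline{a_{r_i-1}})\overline{F}_{\underline{n}-e_i} + \sum_{t \neq i} (\overline{b_{t1}}, \ldots, \overline{b_{tr_t}})\overline{F}_{\underline{n}-e_t}.
\]
Now Lemma \ref{lem1} applies verbatim (with the role of the killed element played by $y$): it lifts this equality to
\[
F_{\underline{n}} = (y, a_1, \ldots, a_{r_i-1})F_{\underline{n}-e_i} + \sum_{t \neq i} (b_{t1}, \ldots, b_{tr_t})F_{\underline{n}-e_t},
\]
and simultaneously guarantees that $y, a_1, \ldots, a_{r_i-1}$ are general in $G_{e_i}$ and each $b_{t1}, \ldots, b_{tr_t}$ is general in $G_{e_t}$. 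Renaming $y, a_1, \ldots, a_{r_i-1}$ as $x_{i1}, \ldots, x_{ir_i}$ and $b_{tk}$ as $x_{tk}$ gives the desired conclusion, completing the induction. The only remaining bookkeeping is the case $r_i = 1$ for the chosen index, where $\overline{F}_{\underline{n}} = \sum_{t\neq i}(\cdots)\overline{F}_{\underline{n}-e_t}$ lifts (again via the mechanism of Lemma \ref{lem1}, or directly) to $F_{\underline{n}} = (y)F_{\underline{n}-e_i} + \sum_{t\neq i}(\cdots)F_{\underline{n}-e_t}$; this is a mild degenerate variant and should be dispatched quickly, possibly by stating Lemma \ref{lem1} so as to also cover $r_i = 1$ or by a direct Nakayama argument.
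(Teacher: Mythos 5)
Your proposal is structurally different from the paper's argument, and the difference is not cosmetic: it leaves a genuine gap at exactly the point you flag as the ``main obstacle.'' You reduce $r_i$ to $r_i-1$ by killing a general $y \in G_{e_i}$ and passing to $\overline{F} = F/yF$, then assert that for a general $y$ one has
\[
\dim_k \overline{F}_{\underline{n}} < \binom{n_1+r_1}{r_1}\cdots\binom{n_i+r_i-1}{r_i-1}\cdots\binom{n_s+r_s}{r_s},
\]
i.e.\ $\dim_k yF_{\underline{n}-e_i}$ is at least $\binom{n_1+r_1}{r_1}\cdots\binom{n_i+r_i-1}{r_i}\cdots\binom{n_s+r_s}{r_s}$. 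You say this is presumably handled by a Green-type or Macaulay-type lemma the authors prove or cite. No such lemma appears in the paper, and no such bound is available here: $F$ is only a finitely generated module over a standard multi-graded $k$-algebra, not a standard graded algebra itself, so the multigraded-module analogue of Green's theorem you need is not at hand, and the inequality can fail for general $y$. Indeed, lower semicontinuity of rank only tells you that a general $y$ attains the \emph{maximum} of $\dim_k y'F_{\underline{n}-e_i}$ over $y' \in G_{e_i}$; it gives no guarantee that this maximum is large enough.

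The paper instead argues by contradiction with a minimal counterexample (first minimal $|\underline{r}|$, then minimal $|\underline{n}|$ for that $\underline{r}$) and splits on the behavior of a fixed $k$-basis $y_{i1},\ldots,y_{ip_i}$ of each $G_{e_i}$. In Case 1, some basis element $y_{1j}$ satisfies $\dim_k y_{1j}F_{\underline{n}-e_1} \ge \binom{n_1+r_1-1}{r_1}\prod_{t\ne 1}\binom{n_t+r_t}{r_t}$; only then does one pass to $F/y_{1j}F$ and invoke minimality of $|\underline{r}|$ and Lemma \ref{lem1}, which is the part your proposal captures. In Case 2 --- the case your induction cannot handle --- \emph{every} basis element $y_{ij_i}$ fails the bound, and the paper passes instead to $F^{(ij_i)} = F/\ann_F(y_{ij_i})$ in degree $\underline{n}-e_i$, uses minimality of $|\underline{n}|$ (not of $|\underline{r}|$), intersects finitely many Zariski-open sets, and --- crucially --- uses the hypothesis $\underline{n} \ge \underline{a}+\underline{1}$ to conclude $F_{\underline{n}} = \sum_{i,j_i} y_{ij_i}F_{\underline{n}-e_i}$, which yields the contradiction. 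Your proposal invokes $\underline{n} \ge \underline{a}+\underline{1}$ only to say it is preserved under quotients, never to prove anything; but the last example in Section 5 shows the conclusion can fail without this hypothesis, so any correct proof must make substantive use of it. What is missing from your write-up is precisely the O'Carroll-style case dichotomy and the annihilator argument in Case 2, together with the degenerate base case $|\underline{n}|=1$ which the paper handles by citing Caviglia's theorem.
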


\begin{proof}
	If $r_1+\cdots+r_s=0$ then $r_i=0$ for all $i$. So we get $\dim_k F_{\underline{n}}<1$, i.e., $F_{\underline{n}}=0$. Since each $r_i=0$, $(x_{i1},\ldots,x_{ir_i})=0$ by convention. So the result follows. Again if $n_1+\cdots+n_s=1$ then $n_i=1$ for some $i$ and $n_j=0$ for all $j \neq i$. Then $\dim_k F_{e_i}< \binom{1+r_i}{r_i}$, i.e., $\dim_k G_{e_i}< \binom{1+r_i}{r_i}$ and hence by \cite[Theorem 2.1]{caviglia} the result follows (as $G_{\underline{0}}=F_{\underline{0}}$).
	So we may assume that $r_1+\cdots+r_s \geq 1$ and $n_1+\cdots+n_s \geq 2$. 
	
	Now suppose that the result is false. Choose a counter example $F= \bigoplus_{\underline{n} \in \N^s} F_{\underline{n}}$. Pick $r_1,\ldots, r_s$ such that $r_1+\cdots+r_s$ is minimal and $n_1+\cdots+n_s$ is minimal for the chosen $r_1,\ldots, r_s$. Let $y_{i1}, \ldots, y_{ip_i}$ be a basis of $G_{e_i}$ for all $i$. Then clearly $\{y_{i1}, \ldots, y_{ip_i}\}_{i=1}^s$ forms a basis for $G_{e_1} \oplus \cdots \oplus G_{e_s}$. Without loss of generality we may assume that $r_1 \geq 1$ and $n_1 \geq 1$.
	
	\noindent
	{\bf Case-1:}
	There exists $j \in \{1, \ldots, p_1\}$ such that \[\dim_k ~y_{1j} F_{\underline{n}-e_1} \geq \binom{n_1+r_1-1}{r_1}\binom{n_2+r_2}{r_2}\cdots \binom{n_s+r_s}{r_s}. \]
	Note that by given condition $y_{1j} F_{\underline{n}-e_1} \subseteq F_{\underline{n}}$. Without loss of generality we may assume that $j=1$. As $y_{11}$ is a homogeneous element, we can pass to the factor ring $\overline{F}= F/y_{11}F$.  So for all $\nn$, we get 
	\begin{align*}
	\dim_k \overline{F}_{\nn}
	=& \dim_k F_{\nn}- \dim_k y_{11}F_{\nn-e_1}\\
	<& \binom{n_1+r_1}{r_1}\binom{n_2+r_2}{r_2}\cdots \binom{n_s+r_s}{r_s}-\binom{n_1+r_1-1}{r_1}\binom{n_2+r_2}{r_2}\cdots \binom{n_s+r_s}{r_s}\\
	=&\binom{n_1+r_1-1}{r_1-1}\binom{n_2+r_2}{r_2}\cdots \binom{n_s+r_s}{r_s}.
	\end{align*}
	Set $\overline{G}=G/y_{11}G$. Clearly $\overline{G}$ is standard graded and $\overline{F}$ is a finitely generated $\overline{G}$-module. The natural map $\nu: G \to \overline{G}$ induces the $k$-vector space epimorphism $\overline{\psi}:= \nu \circ \psi: V \to \overline{G}_{e_1} \oplus \cdots \oplus \overline{G}_{e_s}$. Note that $\overline{G}_{e_1}= G_{e_1}/y_{11}G_{\underline{0}}$ and $\overline{G}_{e_i}= G_{e_i}$ for all $i \neq 1$. Set $\nu|_{G_{\n}}=[\nu]_{\n}: G_{\n} \to \overline{G}_{\n}$. Clearly $\overline{\psi}$ is induced by $\overline{\psi}_i:= [\nu]_{e_i} \circ \psi_i: V_i \to G_{e_i} \to \overline{G}_{e_i}$ for all $i$. Now by minimality of $r_1+\cdots+r_s$, there exists general elements $a_1, \ldots, a_{r_1-1} \in G_{e_1}$ and $b_{i1}, \ldots, b_{ir_i} \in G_{e_i}$ for all $i \neq 1$ such that 
	\begin{equation}\label{eq3}
	\overline{F}_{\nn}= (\overline{a_1}, \ldots, \overline{a_{r_1-1}}) \overline{F}_{\nn-e_1}+ \sum_{i=2}^{s} (\overline{b_{i1}}, \ldots, \overline{b_{ir_i}})\overline{F}_{\nn-e_i}.
	\end{equation}
	If $r_1=1$, then \eqref{eq3} implies that $F_{\nn}= (y_{11}) F_{\nn-e_1}+ \sum_{i=2}^{s} (b_{i1}, \ldots, b_{ir_i})F_{\nn-e_i}.$ Note that this equality is true even if we replace $y_{11}$ by $c y_{11}$ for any $c \in k \backslash \{0\},$ which is a Zariski open subset of $G_{e_1}.$ Thus $y_{11} \in G_{e_1}$ is a general element.
	If $r_1\ge 2$, then by Lemma \ref{lem1} it follows that $y_{11}, a_1, \ldots, a_{r_1-1}\in G_{e_1}$ and $b_{i1}, \ldots, b_{ir_i} \in G_{e_i}$ for $i \neq 1$ are general elements satisfying \[F_{\nn}= (y_{11}, a_1, \ldots, a_{r_1-1}) F_{\nn-e_1}+ \sum_{i=2}^{s} (b_{i1}, \ldots, b_{ir_i})F_{\nn-e_i}.\]  Hence in both cases, we arrive at a contradiction to our assumption.

	\noindent
	{\bf Case-2:}
	For each $i$ and $j_i \in \{1, \ldots, p_i\}$; 
	\begin{equation}\label{eq1}
	\dim_k ~y_{ij_i} F_{\nn-e_i}< \binom{n_1+r_1}{r_1}\cdots\binom{n_i+r_i-1}{r_i}\cdots \binom{n_s+r_s}{r_s}.
	\end{equation}
	Set $K^{(ij_i)}= \ann_F y_{ij_i}$, $L^{(ij_i)}= \ann_G y_{ij_i}$ for $1 \leq i \leq s$ and $j_i \in \{1, \ldots, p_i\}$. Notice that all $K^{(ij_i)}$, $L^{(ij_i)}$ are homogeneous ideals and $L^{(ij_i)} F \subseteq K^{(ij_i)}$. Set $F^{(ij_i)}=F/K^{(ij_i)}$ and $G^{(ij_i)}=G/L^{(ij_i)}$. Clearly $F^{(ij_i)}$ is a finitely generated $G^{(ij_i)}$-module. Then for each $i, j_i$ we write \[K^{(ij_i)}= \bigoplus_{\nn \in \N^s} K^{(ij_i)}_{\nn}, \ F^{(ij_i)}=\bigoplus_{\nn\in \N^s} F^{(ij_i)}_{\nn}, \
	L^{(ij_i)}= \bigoplus_{\nn\in \N^s} L^{(ij_i)}_{\nn}  \mbox{ and }  G^{(ij_i)}=\bigoplus_{\nn\in \N^s} G^{(ij_i)}_{\nn},\] 
	using the natural multi-grading. Clearly $F^{(ij_i)}_{\nn}= F_{\nn}/K^{(ij_i)}_{\nn}$ and $G^{(ij_i)}_{\nn}= G_{\nn}/L^{(ij_i)}_{\nn}$. Note that $K^{(ij_i)}_{e_i}= \ann_{F_{e_i}} y_{ij_i}$ for all $i$ and $j_i$. So for each $i$ and $j_i$ we get a degree $e_i$ isomorphism $y_{ij_i}F_{\nn-e_i} \simeq F^{(ij_i)}_{\nn-e_i}$. Moreover, for each $i$ and $j_i$, the natural map $\nu^{(ij_i)}: G \to G^{(ij_i)}$ induces the $k$-vector space epimorphism $\psi^{(ij_i)}:=\nu^{(ij_i)} \circ \psi : V \to G^{(ij_i)}_{e_1} \oplus \cdots \oplus G^{(ij_i)}_{e_s}$. Clearly $\psi^{(ij_i)}$ can be induced from the epimorphisms 
	\begin{align*}
	\psi^{(ij_i)}_t&:= [\nu^{(ij_i)}]_{e_t} \circ \psi_t: V_t \to G_{e_t} \to G^{ij_i}_{e_t}
	\end{align*}
	for $1 \leq t \leq s$. Note that $n_1+\cdots+(n_i-1)+\cdots+n_s< n_1+\cdots+n_s$ in \eqref{eq1}. By minimality of $n_1+\cdots+n_s$ for given $r_1+\cdots+r_s$ and by the meaning of ``general" for any $i,j_i$ there exists non-empty Zariski-open subset $U^{ij_i}_t$ of $V_t^{r_t}$ yielding elements $z^{ij_i}_{t1}, \ldots, z^{ij_i}_{tr_t} \in \psi_t(V_t)$ for all $1 \leq t \leq s$ such that 
	\[F_{\nn-e_i}^{(ij_i)}= \sum_{t=1}^s (\overline{z^{ij_i}_{t1}}, \ldots, \overline{z^{ij_i}_{tr_t}})F^{(ij_i)}_{\underline{n}-e_i-e_t}.\] 
	Observe that if there exists $u \in \{1,\ldots,s\}$ such that $n_u=0$, then the above equation holds clearly. Thus $F_{\nn-e_i}= \sum_{t=1}^s (z^{ij_i}_{t1}, \ldots, z^{ij_i}_{tr_t})F_{\underline{n}-e_i-e_t}+ K^{ij_i}_{\nn-e_i}$ and hence \[y_{ij_i}F_{\nn-e_i}= \sum_{t=1}^s (z^{ij_i}_{t1}, \ldots, z^{ij_i}_{tr_t})y_{ij_i}F_{\underline{n}-e_i-e_t} \subseteq \sum_{t=1}^s (z^{ij_i}_{t1}, \ldots, z^{ij_i}_{tr_t})F_{\underline{n}-e_t}.\]
	Set $U_t= \bigcap_{i=1}^s \left(\bigcap_{j_i=1}^{p_i} U^{ij_i}_t\right)$. By Remark \ref{rmk2} we get that $U_t$ is a non-empty Zariski-open subset of $V_t^{r_t}$ for all $t$ independent of $i$ and $j_i$ such that for the corresponding elements $x_{t1}, \ldots, x_{tr_t} \in \psi_t(V_t)=G_{e_t}$,
	\begin{align} \label{thm3.2eq}
	\sum_{i=1}^s \left(\sum_{j_i=1}^{p_i}y_{ij_i}\right)F_{\underline{n}-e_i} \subseteq \sum_{t=1}^s (x_{t1}, \ldots, x_{tr_t})F_{\underline{n}-e_t} \subseteq F_{\nn}.
	\end{align}

	Let $\{g_1, \ldots, g_v\}$ be the set of generators of $F$ as a $G$-module, with $\deg g_i=\underline{a_i}$ where $\underline{a_i}=(a_{i1}, \ldots, a_{is})\in \N^s$. Then for all $\underline{m}$, 
	\[F_{\underline{m}}=g_1 G_{\underline{m}-\underline{a_1}}+ \cdots+g_v G_{\underline{m}-\underline{a_v}}.\] 
	Note that $G_{\underline{m}-\underline{a_i}}=0$ if $m_j< a_{ij}$ for some $1 \leq j \leq s$. As $G$ is a standard $\N^s$-graded, we have 
	\[G_{\underline{m}}= \sum_{i=1}^s\left(\sum_{j_i=1}^{p_i} y_{ij_i}\right)G_{\underline{m}-e_i} \] 
	for all $\underline{m} \geq \underline{1}.$ Set $\underline{a}=\max\{\underline{a_1}, \ldots, \underline{a_v}\}$, where the maximum is taken component-wise. If $\underline{n}\geq \underline{a}+\underline{1}$, then 
	\begin{align*}
	F_{\underline{n}}
	=g_1 G_{\underline{n}-\underline{a_1}}+ \cdots+g_v G_{\underline{n}-\underline{a_v}}
	&=\sum_{u=1}^v g_{u} \left(\sum_{i=1}^s\left(\sum_{j_i=1}^{p_i} y_{ij_i}\right)G_{\underline{n}-\underline{a_u}-e_i}\right)\\
	&=\sum_{i=1}^s\left(\sum_{j_i=1}^{p_i} y_{ij_i}\right) \left(\sum_{u=1}^v g_{u} G_{\underline{n}-\underline{a_u}-e_i}\right)\\
	&=\sum_{i=1}^s\left(\sum_{j_i=1}^{p_i} y_{ij_i}\right)F_{\underline{n}-e_i}.
	\end{align*}
	Thus if $\underline{n} \geq \underline{a}+\underline{1}$, then using the above equation and \eqref{thm3.2eq}, we get 
	\[ F_{\underline{n}} = \sum_{t=1}^s (x_{t1}, \ldots, x_{tr_t})F_{\underline{n}-e_t} \]
	a contradiction to our assumption. Hence the result follows.
\end{proof}

\begin{remark}
	Let $\f=\{I_n\}_{n \geq 0}$ be a good filtration such that $\mu(I_n)< \binom{n+r}{r}$ for some $n>0$ and $r \geq 0.$ If $n \geq a+1$, where $a$ is as defined in the above theorem, then by Theorem \ref{esthm-mul-fil} there exist $r$ general elements $x_1, \ldots, x_r \in I_1$ such that $I_{n}=(x_1, \ldots, x_r)I_{n-1}.$ Let $\dim F(\f)=r$. If $I_{m}=(x_1, \ldots, x_r)I_{m-1}$ for all $m \geq n$, then $(x_1, \ldots, x_r)$ is a minimal reduction of $\f.$ But $I_{n}=(x_1, \ldots, x_r)I_{n-1}$ for some $n$ does not always imply $I_{m}=(x_1, \ldots, x_r)I_{m-1}$ for all $m \geq n$. Later in Theorem \ref{ESf} we give a sufficient condition for $(x_1, \ldots, x_r)$ being a minimal reduction of $\f$. 
\end{remark}

As an immediate consequence of the above result, we get the following.
\begin{corollary}\label{mulg-adic}
	Let $(R, \m, k)$ be a Noetherian local ring with infinite residue field $k$ and $I_1,\ldots, I_s$ be ideals in $R$. Suppose 
	\[\mu(I_1^{n_1}\cdots I_s^{n_s})< \binom{n_1+r_1}{r_1}\cdots \binom{n_s+r_s}{r_s}\] 
	for some integers $n_1+\cdots+n_s \geq 1$ and $r_1+\cdots+r_s \geq 0$. Then for all $1 \leq i \leq s$ there exist ``general" elements $x_{i1}, \dots, x_{ir_i} \in I_i$ such that for all $\underline{m} \geq \underline{n}$, 
	\[I_1^{m_1}\cdots I_s^{m_s}= \sum_{i=1}^s (x_{i1}, \dots, x_{ir_i})I_1^{m_1}\cdots I_i^{m_i-1}\cdots I_s^{m_s}.\]
\end{corollary}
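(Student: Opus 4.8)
The plan is to apply Theorem~\ref{esthm-mul-fil} to the $\underline I$-adic filtration $\f=\{\underline I^{\underline n}\}_{\underline n\in\N^s}$, with $\underline I=(I_1,\ldots,I_s)$, and then to propagate the resulting identity from the one exponent vector $\underline n$ to all $\underline m\geq\underline n$. First I would record that $\f$ is an $\underline I$-good filtration in the trivial way, since $I_{e_i}=I_i$ and $\mathcal R(\f)=\mathcal R(\underline I)$ is a finite module over itself; consequently $F(\f)=\mathcal R(\f)\otimes_R R/\m=G$, where $G=F(I_1,\ldots,I_s)$. Thus $F(\f)$ is the free $G$-module of rank one generated in degree $\underline 0$, the vector $\underline a$ of Theorem~\ref{esthm-mul-fil} is $\underline 0$, and the condition $\underline n\geq\underline a+\underline 1$ reduces to $\underline n\geq\underline 1$. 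We may in fact assume each $n_i\geq 1$: if $n_i=0$ for some $i$ then $\binom{n_i+r_i}{r_i}=1$, so the hypothesis concerns only the ideals $I_t$ with $n_t\neq 0$; the full statement then follows by applying the case $\underline n\geq\underline 1$ to those ideals, multiplying through by the $I_i^{m_i}$ with $n_i=0$, and taking $x_{i1},\dots,x_{ir_i}\in I_i$ arbitrary for each index $i$ with $n_i=0$ (legitimate since the desired identity then holds for every such choice, hence in particular for a general one).

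So suppose $n_i\geq 1$ for all $i$. Since $\mu(\underline I^{\underline n})<\prod_{i=1}^s\binom{n_i+r_i}{r_i}$, Theorem~\ref{esthm-mul-fil} yields ``general'' elements $x_{ij}\in I_i$ with $\underline I^{\underline n}=\sum_{i=1}^s J_i\,\underline I^{\underline n-e_i}$, where $J_i:=(x_{i1},\dots,x_{ir_i})\subseteq I_i$. I would then prove $\underline I^{\underline m}=\sum_{i=1}^s J_i\,\underline I^{\underline m-e_i}$ for all $\underline m\geq\underline n$ by induction on $|\underline m|-|\underline n|$, the base case being the identity just produced. For the inductive step choose a coordinate $j$ with $m_j>n_j$; then $m_j\geq 1$ and $\underline m-e_j\geq\underline n$, so, using the inductive hypothesis at $\underline m-e_j$,
\[
\underline I^{\underline m}=I_j\,\underline I^{\underline m-e_j}=\sum_{i=1}^s J_i\,I_j\,\underline I^{\underline m-e_j-e_i}\subseteq\sum_{i=1}^s J_i\,\underline I^{\underline m-e_i}\subseteq\underline I^{\underline m},
\]
where the inclusions only use $I_j\,\underline I^{\underline m-e_j-e_i}\subseteq\underline I^{\underline m-e_i}$ and $J_i\subseteq I_i$, with the convention $\underline I^{\underline v}=0$ for $\underline v\notin\N^s$. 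Hence all terms are equal, which is the required identity at $\underline m$.

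I do not expect a genuine obstacle here: the corollary is an application of Theorem~\ref{esthm-mul-fil}, and the two things to get right are the conceptual observation that for an adic filtration the fiber cone $F(\f)$ is the \emph{whole} multigraded fiber cone $G$ (so that $\underline a=\underline 0$), and the elementary bookkeeping in the propagation step, where the convention $\underline I^{\underline v}=0$ for $\underline v\notin\N^s$ must be handled consistently. The only mildly delicate point is the reduction to the case $\underline n\geq\underline 1$, which is needed because Theorem~\ref{esthm-mul-fil} requires every coordinate of $\underline n$ to be at least $1$ while the corollary only assumes $n_1+\dots+n_s\geq 1$.
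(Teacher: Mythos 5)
Your proof is correct and takes the paper's intended approach, namely applying Theorem~\ref{esthm-mul-fil} to the $\underline I$-adic filtration (for which $F(\f)=G$ and hence $\underline a=\underline 0$). The paper presents the corollary without proof as an ``immediate consequence,'' so your careful treatment of the two points it glosses over --- the reduction to the case $\underline n\geq\underline 1$ when some $n_i=0$, and the elementary propagation from $\underline n$ to all $\underline m\geq\underline n$ using the same general elements --- is precisely the bookkeeping the paper leaves to the reader.
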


\section{Eakin-Sathaye theorem for reduction number of $\N$-graded good filtrations}

We now prove an analogue of the Eakin-Sathaye Theorem to estimate the reduction number of an equimultiple good $\N$-graded filtration in Cohen-Macaulay rings. We impose additional assumptions on depth of the associated graded ring and the fiber cone.

Let $R$ be a Noetherian local ring and $\f=\{I_n\}$ be a filtration. For an element $x \in I_1,$ $x^* \in I_1/I_2$ denotes the image of $x$ in $\gr_{\f}(R)$ and $x^\circ \in I_1/\m I_1$ denotes the image of $x$ in $F(\f).$ If $x^* \not= 0$, then it is said to be superficial in $\gr_{\f}(R)$ if $(0 : x^*) \cap \gr_{\f}(R)_n = 0$ for all $n$ large. Similarly, if $x^\circ \not= 0,$ then it is superficial in $F(\f)$ if $(0 : x^\circ) \cap F(\f)_n = 0$ for all $n$ large. Let $\min(R)$ denote the set of all minimal prime ideals of $R.$

\begin{lemma}\label{depthff}
	Let $(R,\m)$ be a Noetherian local ring of dimension $d>0$ with $R/\m=k$ infinite and $I$ be an ideal in $R$ such that $I \nsubseteq \mathfrak{p}$ for any $\mathfrak{p} \in \min(R)$. Let 
	$\f=\{I_n\}_{n \in \N}$ be an $I$-good filtration. Then there exists $x \in I \setminus \m I_1$ such that:\\
	{\rm (1)} $x \notin \bigcup_{\fp \in \min(R)} \fp$ \\
	{\rm (2)} $x^\circ$ is superficial in $F(\f)$ \\ 
	{\rm (3)} $x^*$ is superficial in $\gr_{\f}(R)$.
\end{lemma}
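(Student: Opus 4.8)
The plan is to produce a single $x\in I$ that avoids a finite list of ideals of $R$ — the minimal primes, together with ideals that encode the failure of superficiality in $\gr_{\f}(R)$ and in $F(\f)$ — and then apply prime avoidance over the infinite field $k$. I first reduce the two superficiality conditions to conditions inside $R$. Recall the homogeneous criterion: for $0\neq\overline y\in\gr_{\f}(R)_1$, the class $\overline y$ is superficial in $\gr_{\f}(R)$ if and only if $\overline y\notin\mathfrak P$ for every $\mathfrak P\in\Ass_{\gr_{\f}(R)}(\gr_{\f}(R))$ with $\gr_{\f}(R)_+\not\subseteq\mathfrak P$ (such a $\mathfrak P$ forces $(0:\overline y)$ to be nonzero in arbitrarily high degrees; conversely, if $\overline y$ avoids all of them then $(0:\overline y)$ is killed by a power of $\gr_{\f}(R)_+$, hence vanishes in high degrees), and the same holds for $F(\f)$. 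Since $\mathfrak P$ is homogeneous and $\overline y$ has degree one, the condition depends only on $\mathfrak P\cap\gr_{\f}(R)_1$, which is an $R$-submodule of $I_1/I_2$ stable under $\gr_{\f}(R)_0=R/I_1$; its preimage $\widetilde{\mathfrak P}\subseteq I_1$ under $I_1\twoheadrightarrow I_1/I_2$ is therefore an ideal of $R$, and ``$x^*\notin\mathfrak P$'' becomes ``$x\notin\widetilde{\mathfrak P}$''. Similarly each relevant $\mathfrak Q\in\Ass(F(\f))$ gives an ideal $\widetilde{\mathfrak Q}\subseteq I_1$ with ``$x^\circ\notin\mathfrak Q$'' $\Leftrightarrow$ ``$x\notin\widetilde{\mathfrak Q}$''. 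Let $\mathfrak b_1,\dots,\mathfrak b_N$ be the finite list consisting of the minimal primes of $R$, all the $\widetilde{\mathfrak P}$'s, all the $\widetilde{\mathfrak Q}$'s, and the two ideals $\m I_1$ and $I_2$.

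The key point is that $I\not\subseteq\mathfrak b_i$ for every $i$. For the minimal primes this is the hypothesis. Since $\f$ is $I$-good, $\gr_{\f}(R)$ is a finite module over the image $A$ of $\gr_I(R)$ in $\gr_{\f}(R)$; $A$ is standard graded over $A_0=R/I_1$ with $A_1=(I+I_2)/I_2$, and module-finiteness forces $\gr_{\f}(R)/A_+\gr_{\f}(R)$ to be concentrated in bounded degrees, so $\gr_{\f}(R)_+$ and $A_+\gr_{\f}(R)$ have the same radical; hence a relevant $\mathfrak P$ fails to contain $A_+$, so fails to contain $A_1$, which gives $I\not\subseteq\widetilde{\mathfrak P}$. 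Running the same argument with $F(\f)$ finite over the image $B=k[\overline I]$ of $F(I)$ — standard graded over the field $k$ with $B_1=\overline I:=(I+\m I_1)/\m I_1$ — yields $I\not\subseteq\widetilde{\mathfrak Q}$. Finally $I\not\subseteq I_2$ and $I\not\subseteq\m I_1$: if, say, $I\subseteq I_2$, then $A_1=0$, so $\gr_{\f}(R)$ is concentrated in bounded degrees and $I_m=I_{m+1}$ for $m\gg 0$; combined with $I$-goodness ($I_{m+1}=II_m$ for $m\gg 0$) and Nakayama's lemma this forces $I_m=0$ for $m\gg 0$, hence $I_1$ nilpotent, contradicting $I\not\subseteq\mathfrak p$ for $\mathfrak p\in\min(R)$; the case $I\subseteq\m I_1$ is identical with $B_1$ in place of $A_1$.

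Now $I\not\subseteq\mathfrak b_1\cup\dots\cup\mathfrak b_N$: if it were, then $I=\bigcup_i(\mathfrak b_i\cap I)$, and by Nakayama's lemma $\mathfrak b_i\cap I+\m I\subsetneq I$ for each $i$ (otherwise $\mathfrak b_i\cap I=I$, i.e. $I\subseteq\mathfrak b_i$), so the images of the $\mathfrak b_i\cap I$ in the $k$-vector space $I/\m I$ would be finitely many proper subspaces covering it — impossible since $k$ is infinite. Pick $x\in I\setminus\bigcup_i\mathfrak b_i$. Then $x$ avoids every minimal prime, giving (1); $x\notin\m I_1$ (as required in the statement) and $x\notin I_2$, so $x^\circ\neq 0$ and $x^*\neq 0$; and $x$ avoids every relevant associated prime of $\gr_{\f}(R)$ and of $F(\f)$, so $x^*$ is superficial in $\gr_{\f}(R)$ and $x^\circ$ is superficial in $F(\f)$, giving (3) and (2).

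I expect the real obstacle to be the middle step: arranging that the superficial-obstruction primes, which a priori only avoid the irrelevant ideal $\gr_{\f}(R)_+$ (resp. $F(\f)_+$), in fact avoid the degree-one image of $I$, i.e. give ideals of $R$ not containing $I$. This is precisely where the hypothesis $I\not\subseteq\mathfrak p$ for $\mathfrak p\in\min(R)$ is used, and it enters through the $I$-goodness of $\f$: that is what makes $\gr_{\f}(R)$ and $F(\f)$ module-finite over the standard graded subalgebras generated by the image of $I$, pushing the relevant primes off $A_1$ and $B_1$ and simultaneously excluding $I\subseteq\m I_1$ and $I\subseteq I_2$. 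The remaining ingredients — the associated-prime criterion for superficiality, and the fact that a vector space over an infinite field is not a finite union of proper subspaces — are standard.
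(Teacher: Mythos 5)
Your proof is correct and follows essentially the same route as the paper's: reduce superficiality to avoiding the relevant associated primes of $\gr_{\f}(R)$ and $F(\f)$, throw in $\m I_1$, $I_2$ and the minimal primes, and conclude by the fact that a vector space over an infinite field is not a finite union of proper subspaces. The only presentational difference is that the paper pulls the relevant primes back through the Rees algebra $\R(\f) \to \R(I)$ before intersecting with $I$, whereas you intersect with the degree-one piece $\gr_1$ (resp.\ $F_1$) and pull back to an ideal inside $I_1$; you are also somewhat more explicit than the paper in verifying that each of the resulting subspaces of $I/\m I$ is \emph{proper} (i.e., that $I$ is not contained in any $\widetilde{\mathfrak P}$, $\widetilde{\mathfrak Q}$, $\m I_1$ or $I_2$), which the paper leaves implicit behind the phrase ``since $k$ is infinite, $I/\m I \neq V$.''
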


\begin{proof}
	Let  
	\begin{align*}
	\Ass (\gr_{\f}(R)) = \{P_1, \ldots, P_r, P_{r+1} \ldots, P_{r'}\}, \quad 
	\Ass (F(\f)) = \{Q_1, \ldots, Q_m, Q_{m+1}, \ldots, Q_{m'}\} 
	\end{align*} 
	such that for all $n$ large, $I_n/I_{n+1} \subseteq P_i$ for $r+1 \le i \le r'$ and $I_n/\m I_n \subseteq Q_j$ for $m+1 \le j \le m'.$ Consider the ideal $\mathscr{I}= \oplus_{n \ge 0}I_{n+1}t^n$ of $\R(\f).$ Since $\gr_{\f}(R) = \mathcal{R}(\f)/\mathscr{I}$ and $F(\f) = \mathcal{R}(\f)/\m\mathcal{R}(\f)$, let $\mathcal{P}=\{P'_1, \ldots, P'_r, Q'_1, \ldots, Q'_m\}$ be the collection of prime ideals in $\mathcal{R}(\f)$ which are pre-images of the ideals $P_1,\ldots,P_r$ and $Q_1,\ldots,Q_m.$ Observe that $\mathcal{R}(I) \subseteq \mathcal{R}(\f)$ is an integral extension. Set $P'_i \cap \mathcal{R}(I)=P''_i$ and $Q'_j \cap \mathcal{R}(I)=Q''_j$ for all $i$ and $j$. Since $k$ is infinite, $I/\m I \not= V$, where
	\begin{align*}
	V = \left(\frac{\m I_1 \cap I}{\m I}\right) 
	\bigcup \left(\frac{I_2 \cap I + \m I}{\m I}\right)
	\bigcup_{\fp \in \min(R)} \left(\frac{\fp \cap I + \m I}{\m I}\right) \
    \bigcup_{i=1}^r \left(\frac{P''_i \cap I + \m I}{\m I}\right) \
	\bigcup_{j=1}^m \left(\frac{Q''_j \cap I + \m I}{\m I}\right).
	\end{align*} 
	Hence we can choose 
	\[x \in I \setminus \left( (\m I_1 \cap I) \bigcup \ (I_2 \cap I) \bigcup_{\fp \in \min(R)} (\fp \cap I) \ \bigcup_{i=1}^r (P''_i \cap I) \ \bigcup_{j=1}^m(Q''_j \cap I) \right).\] 
	We first show that $(0 : x^\circ) \cap F(\f)_n = 0$ for $n$ large. Let $y^\circ \in (0 : x^\circ).$ Write $(0) = M_1 \cap \cdots \cap M_{m'}$ be a primary decomposition of $(0)$ in $F(\f)$ such that $M_j$ is $Q_j$-primary for $j =1,\dots,m'.$ Then $y^\circ x^\circ \in M_j$ for all $1 \le j \le m'.$ Since $x^\circ \notin Q_j$ for $j =1,\dots,m$, it follows that $y^\circ \in M_j$ for $j =1,\dots,m.$ Thus $(0 : x^\circ) \subseteq M_1 \cap \cdots \cap M_m.$ For $m + 1 \le j \le m'$, $F(\f)_n \subseteq Q_j$ for $n$ large. Therefore $F(\f)_n \subseteq M_{m+1} \cap \cdots \cap M_{m'}.$ This implies that for all $n$ large, $(0 : x^\circ) \cap F(\f)_n \subseteq M_1 \cap \cdots \cap M_{m'} = (0).$ Hence $x^\circ$ is superficial in $F(\f).$ A similar argument shows that $x^*$ is superficial in $\gr_{\f}(R).$
\end{proof}

\begin{remark}\label{rmksup}
	In the above proof, observe that there exists a non-empty Zariski open subset $U$ of $I/\m I$, such that for any $x+\m I \in U$, the lemma holds. Thus $x \in I$ is a general element.
\end{remark}

\begin{theorem}\label{ESfdim1}
	Let $(R, \m)$ be a Cohen-Macaulay local ring of dimension $d>0$ with $R/\m=k$ infinite. Let $\f=\{I_n\}_{n \in \N}$ be an equimultiple good filtration such that $\grade\gr_{\f}(R)_+ \ge l(\f) =1$ and $F(\f)$ is Cohen-Macaulay. If $\mu(I_n)< (n+1)$ for some $n \ge 1$, then there exists a general element $x \in I_1$ such that  $I_m = (x)I_{m-1}$ for all $m \ge n$. 
\end{theorem}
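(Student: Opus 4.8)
The plan is to combine the multi-graded Eakin--Sathaye theorem (in the $s=1$ case, Theorem \ref{esthm-mul-fil}) with a reduction-via-superficial-element argument to upgrade ``$I_n=(x)I_{n-1}$ for one $n$'' to ``$I_m=(x)I_{m-1}$ for all $m\ge n$.'' First, since $\grade\gr_{\f}(R)_+\ge 1$ and $F(\f)$ is Cohen--Macaulay of dimension $l(\f)=1$, Lemma \ref{depthff} (applicable because $\f$ is equimultiple, so $\hgt I_1=1>0$ forces $I_1\nsubseteq\fp$ for any $\fp\in\min(R)$) produces a general element $x\in I_1$ whose image $x^*$ is superficial in $\gr_{\f}(R)$ and whose image $x^\circ$ is superficial in $F(\f)$; by Remark \ref{rmksup} this choice is Zariski-generic. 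The hypothesis $\grade\gr_{\f}(R)_+\ge 1$ promotes superficiality of $x^*$ to being a nonzerodivisor on $\gr_{\f}(R)$, so $x$ is a \emph{filter-regular}, indeed regular, element: $(I_{n+1}:x)\cap I_n = I_{n+1}$ for all $n$, i.e.\ $xI_n\cap I_{n+1}=xI_{n+1}$ for all $n$. Likewise, superficiality of $x^\circ$ together with $\dim F(\f)=1$ and $F(\f)$ Cohen--Macaulay makes $x^\circ$ a nonzerodivisor on $F(\f)$, so $F(\f)/x^\circ F(\f)$ has finite length.

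Next, the hypothesis $\mu(I_n)<n+1=\binom{n+1}{1}$ is exactly the $s=1$, $r=1$ instance of the Eakin--Sathaye bound. One needs $n\ge a+1$ where $a$ is the top degree of a generating set of $F=F(\f)$ over $G=F(I_1)$ (here $G=F$ since $s=1$, so in fact $a=0$ and the condition $n\ge 1$ suffices); thus Theorem \ref{esthm-mul-fil} (equivalently Proposition \ref{mul-fil-pro}) yields a general element $x\in I_1$ with $I_n=(x)I_{n-1}$. Since both the conclusion of Lemma \ref{depthff} and the conclusion of Theorem \ref{esthm-mul-fil} hold on dense Zariski-open subsets of $I_1/\m I_1$, their intersection is nonempty, so we may choose a \emph{single} general $x\in I_1$ that is a nonzerodivisor on $\gr_{\f}(R)$, is a nonzerodivisor on $F(\f)$, and satisfies $I_n=(x)I_{n-1}$.

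Now I would run the inductive step $m\ge n+1$. Assume $I_{m-1}=(x)I_{m-2}$; we want $I_m=(x)I_{m-1}$. Using $x^*$ a nonzerodivisor on $\gr_{\f}(R)$ we get the exact sequence relating Hilbert functions $H_{\gr_{\f}(R)}(j)=H_{F_x}(j)$-type identities; more directly, the Cohen--Macaulayness of $F(\f)$ with $x^\circ$ regular gives $\mu(I_j)-\mu(I_{j-1})=\dim_k F(\f)/x^\circ F(\f)$ is eventually constant, in fact $\dim_k (F(\f)_j/x^\circ F(\f)_{j-1})$ is nonincreasing and $\mu(I_j)=\mu(I_{j-1})$ as soon as it is $\le$ the multiplicity; the bound $\mu(I_n)<n+1$ combined with the $1$-dimensional Cohen--Macaulay structure forces $\mu(I_m)=\mu(I_{m-1})$ for all $m\ge n$. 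On the other hand, $I_m\supseteq (x)I_{m-1}$ always, and the containment $(x)I_{m-1}\subseteq I_m$ induces a surjection $I_{m-1}/\m I_{m-1}\twoheadrightarrow (xI_{m-1}+\m I_m)/\m I_m$; since $x$ is regular on $\gr_{\f}(R)$ and on $F(\f)$ this map is injective, so $\dim_k (xI_{m-1}+\m I_m)/\m I_m = \mu(I_{m-1})=\mu(I_m)=\dim_k I_m/\m I_m$, whence $xI_{m-1}+\m I_m = I_m$ and Nakayama gives $I_m=(x)I_{m-1}$.

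The main obstacle is the bookkeeping in the last paragraph: making precise \emph{why} $\mu(I_m)$ stabilizes at its value for small $m$ and why the natural surjection $I_{m-1}/\m I_{m-1}\to I_m/\m I_m$ (multiplication by $x$ composed with projection) becomes an isomorphism once $m\ge n$. The cleanest route is to pass to $R/xR$ (or $\gr$-level to $\gr_{\f}(R)/x^*\gr_{\f}(R)$) where the filtration becomes the zero filtration in high degrees because $\gr_{\f}(R)/x^*$ has dimension $0$; equimultiplicity $l(\f)=\hgt I_1=1$ and $\grade\gr_{\f}(R)_+\ge 1$ are precisely what is needed for $x^*$ to be a system of parameters that is regular, so $\gr_{\f}(R)/(x^*)$ has finite length and $I_m=(x)I_{m-1}$ for $m\gg 0$ automatically; then the Eakin--Sathaye input $\mu(I_n)<n+1$ pins down the threshold at $m\ge n$ rather than merely $m\gg 0$. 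I expect the proof to be short once these reductions are in place, and the $d>1$ case (the stated Theorem in the introduction, with $r=l(\f)$) will follow by inducting on $d$ via a generic hyperplane section $x_r$, reducing modulo $x_r$ to the $(d-1)$-dimensional, $(r-1)$-spread situation while preserving all the depth hypotheses.
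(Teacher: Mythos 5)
There is a genuine gap in your proposal, and it occurs at the very step you rely on most heavily. You invoke Theorem~\ref{esthm-mul-fil} (equivalently Proposition~\ref{mul-fil-pro}) in the case $s=1$, $r=1$ to extract a general $x$ with $I_n=(x)I_{n-1}$, and you dismiss the hypothesis ``$n\ge a+1$'' by asserting ``here $G=F$ since $s=1$, so in fact $a=0$.'' That is false. For a general $\N$-graded good filtration $\f=\{I_n\}$ one has $G=F(I_1)=\bigoplus_n I_1^n/\m I_1^n$ but $F=F(\f)=\bigoplus_n I_n/\m I_n$, and these disagree unless $\f$ is the $I_1$-adic filtration; $F$ can very well require $G$-module generators in strictly positive degree. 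The paper's own last example makes this concrete: for $\f=\{\overline{\m^n}\}$ in $\C[[X,Y]]/(X^4+Y^2)$ one has $a=2$, and there $\mu(\overline{\m^2})=2<3=\binom{2+1}{1}$ while $\overline{\m^2}\neq (x)\m$. So $\mu(I_n)<n+1$ together with $n\ge 1$ does \emph{not} by itself produce $I_n=(x)I_{n-1}$ via Theorem~\ref{esthm-mul-fil}, and the rest of your argument (which tries to push from degree $n$ upward) is built on a conclusion you have not yet established.

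The paper's actual proof takes a different route that avoids Theorem~\ref{esthm-mul-fil} entirely. After producing a general superficial $x$ (where your first paragraph agrees with the paper: $x$ is a nonzerodivisor on $R$, $x^*$ is $\gr_{\f}(R)$-regular by \cite[Lemma 2.1]{huckabaMarley}, $x^\circ$ is $F(\f)$-regular by the depth hypothesis, and the intersection identities $(x)\cap I_i=(x)I_{i-1}$ and $(x)\cap\m I_i=(x)\m I_{i-1}$ hold by \cite[Prop.\ 3.5]{huckabaMarley} and \cite[Thm.\ 2.8]{cortadellasZar}), the paper runs a direct counting argument: these identities give $\mu(I_i)-\mu(I_{i-1})=\ell\bigl(I_i/(\m I_i+(x)I_{i-1})\bigr)\ge 0$ with equality iff $I_i=(x)I_{i-1}$, and the $\gr$-regularity of $x^*$ makes the natural map $I_{i+1}/(x)I_i\to I_i/(x)I_{i-1}$ injective, so once $I_i=(x)I_{i-1}$ at some level it persists for all higher levels. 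If $I_n\neq (x)I_{n-1}$, the persistence forces $I_i\neq(x)I_{i-1}$ for all $1\le i\le n$, hence $1=\mu(I_0)<\mu(I_1)<\cdots<\mu(I_n)$, giving $\mu(I_n)\ge n+1$, a contradiction. Your third paragraph gestures at exactly this mechanism (the nondecreasing $\mu$-sequence and the injectivity/stabilization), so you have the right ingredients in hand; if you drop the appeal to Theorem~\ref{esthm-mul-fil} and make the counting argument precise, the proof goes through.

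One smaller remark: your final sentence about deducing the general $r$ case by induction on $d$ via a generic hyperplane section is consistent with how the paper proves Theorem~\ref{ESf}, so that part of the plan is sound.
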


\begin{proof}
	By Lemma \ref{depthff} and Remark \ref{rmksup} there exists a general element $x \in I_1$ such that $x \notin \bigcup_{\mathfrak{p} \in \min(R)} \mathfrak{p}$ and $x^\circ$, $x^*$ are superficial elements in $F(\f)$ and $\gr_{\f}(R)$ respectively. Since $R$ is Cohen-Macaulay and $\hgt I_1=1$ so $x$ is a nonzerodivisor. As $\depth F(\f)= 1$, all the associated primes of $(0)$ in $F(\f)$ are relevant primes and hence $x^\circ$ is $F(\f)$-regular. Using \cite[Lemma 2.1]{huckabaMarley}, $x^*$ is $\gr_{\f}(R)$-regular. From \cite[Proposition 3.5]{huckabaMarley} it then follows that $(x) \cap I_i = (x)I_{i-1}$ for all $i \ge 1$. Note that $I_{i-1} \simeq (x)I_{i-1}$ which implies that $\mu((x)I_{i-1})=\mu(I_{i-1})$ for all $i \ge 1$. Since $\depth_{(x^\circ)} F(\f)=1$, using \cite[Theorem 2.8]{cortadellasZar} it follows that $(x) \cap \m I_i= (x)\m I_{i-1}$ for all $i \geq 1$. 
	Therefore,
	$$(x)I_{i-1} \cap \m I_i \subseteq (x) \cap \m I_i=(x)\m I_{i-1} \subseteq (x)I_{i-1} \cap \m I_i$$ 
	and hence $(x)I_{i-1} \cap \m I_i= (x)\m I_{i-1}$ for all $i \ge 1$. For all $i \ge 1$,
	\begin{align*}
	\mu(I_i) - \mu(I_{i-1})
	= \mu(I_i)-\mu((x)I_{i-1})
	&= \dim_k \frac{I_i}{\m I_i}- \dim_k \frac{(x)I_{i-1}}{\m(x) I_{i-1}} \\
	&= \dim_k \frac{I_i}{\m I_i}- \dim_k \frac{(x)I_{i-1}}{(x)I_{i-1} \cap \m I_i}\\
	&= \dim_k \frac{I_i}{\m I_i}- \dim_k \frac{(x)I_{i-1}+\m I_i}{\m I_i} \\
	&= \dim_k \frac{I_i}{(x)I_{i-1}+\m I_i} \\
	&= \ell(I_i/\m I_i+(x)I_{i-1}) \geq 0
	\end{align*}
	Note that $\ell(I_i/\m I_i+(x)I_{i-1})=0$ if and only if $I_i=\m I_i+(x)I_{i-1}$, i.e., $I_i=(x)I_{i-1}$, for all $i \ge 1$, by Nakayama Lemma. 
	
	For all $i \ge 1$, the inclusion map $f_i: I_{i+1} \hookrightarrow I_{i}$ induces the map $$\widetilde{f}_i : \frac{I_{i+1}}{(x)I_{i}} \to \frac{I_{i}}{(x)I_{i-1}}.$$ 
	We claim that $\widetilde{f}_i$ is an injective map for all $i.$ It is sufficient to show that $(x)I_{i-1} \cap I_{i+1}=(x)I_{i}$. Clearly $(x)I_{i} \subseteq (x)I_{i-1} \cap I_{i+1}$. Let $xy \in (x)I_{i-1} \cap I_{i+1}$ for some $y \in I_{i-1}$. Then $y \in (I_{i+1}: x)=I_{i}$ as $x^*$ is $\gr_{\f}(R)$-regular. Thus $xy \in (x)I_{i}$ and hence $(x)I_{i-1} \cap I_{i+1} \subseteq (x)I_{i}$. The claim follows and hence $\widetilde{f}_i$ is injective for all $i \ge 1$. It follows that if $I_i = (x)I_{i-1}$ for some $i \ge 1$, then $I_j = (x)I_{j-1}$ for all $j \geq i$. 
	
	If possible, let $I_n \neq  (x)I_{n-1}$. Then by the above observation $I_i \neq (x)I_{i-1}$ for all $1 \le i \leq n$. Thus we have $0< \mu((x))< \mu(I_1)< \cdots< \mu(I_n)$ and hence $\mu(I_n)\geq n+1$, a contradiction. Therefore $I_n =  (x)I_{n-1}$ which implies that $I_m = (x)I_{m-1}$ for all $m \geq n$. 
\end{proof}

\begin{remark}
	In the above theorem, if $n=1$, i.e., $\mu(I_1)<2$, then $I_m = (x)I_{m-1}$ for all $m \geq 1$. In particular, $(x) = I = I_1$ and hence $I_n = (x^n)$ for all $n.$
\end{remark}

\begin{theorem}\label{ESf}
	Let $(R, \m)$ be a Cohen-Macaulay local ring with $R/\m=k$ infinite. Let $\f=\{I_n\}_{n \in \N}$ be an equimultiple good filtration such that $\grade\gr_{\f}(R)_+ \geq l(\f)=r$ and $F(\f)$ is Cohen-Macaulay. Let $\mu(I_n)< \binom{n+r}{r}$ for some $n\ge 1$. Then there exist $r$ general elements $x_1, \ldots, x_r \in I_1$ such that $I_m = (x_1,\ldots,x_r)I_{m-1}$ for all $m \geq n$. 
\end{theorem}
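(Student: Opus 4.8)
The plan is to proceed by a double induction: an outer induction on $r = l(\f)$ and, once $r \ge 2$, an inner induction on $n$. When $r = 0$ the hypothesis forces $\mu(I_n) = 0$, so $I_n = 0$ and the statement is trivial, while the case $r = 1$ is exactly Theorem \ref{ESfdim1} (note $d \ge \hgt I_1 = 1 > 0$). So fix $r \ge 2$, assume the theorem for all equimultiple good filtrations of analytic spread $r-1$, and assume it for $\f$ itself whenever the numerical hypothesis is imposed at a level $n' < n$.

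The first step is to pass to a general hyperplane section. Since $\hgt I_1 = r \ge 1$, the ideal $I_1$ lies in no minimal prime of $R$, so Lemma \ref{depthff} and Remark \ref{rmksup} furnish a general element $x \in I_1$ that avoids every minimal prime of $R$ --- hence a nonzerodivisor, as $R$ is Cohen-Macaulay and $\Ass R = \min(R)$ --- and such that $x^*$ is superficial in $\gr_\f(R)$ and $x^\circ$ is superficial in $F(\f)$. Because $\grade \gr_\f(R)_+ \ge r \ge 1$ and $\depth F(\f) = \dim F(\f) = l(\f) = r \ge 1$, these superficial elements are in fact regular, just as in the proof of Theorem \ref{ESfdim1}: $x^*$ is $\gr_\f(R)$-regular and $x^\circ$ is $F(\f)$-regular. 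Put $\bar R = R/(x)$ and $\bar\f = \{\bar I_m\}$ with $\bar I_m := (I_m + (x))/(x)$. Regularity of $x^*$ yields $I_m \cap (x) = xI_{m-1}$ for all $m \ge 1$, hence $\bar I_m \cong I_m/xI_{m-1}$, $\gr_{\bar\f}(\bar R) \cong \gr_\f(R)/(x^*)$ and $F(\bar\f) \cong F(\f)/(x^\circ)$. Consequently $\bar R$ is Cohen-Macaulay of dimension $d-1$, $\bar\f$ is a good $\bar I_1$-filtration, $F(\bar\f)$ is Cohen-Macaulay of dimension $r-1 = l(\bar\f)$, $\grade \gr_{\bar\f}(\bar R)_+ \ge \grade\gr_\f(R)_+ - 1 \ge r-1$, and, since $\bar R$ is Cohen-Macaulay with $\dim \bar R/\bar I_1 = \dim R/I_1 = d-r$, we get $\hgt \bar I_1 = r-1$, so $\bar\f$ is again equimultiple. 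Thus every structural hypothesis of the theorem descends to $(\bar R, \bar\f, r-1)$.

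The numerical accounting is the crux. Since $x^\circ$ is $F(\f)$-regular, multiplication by $x^\circ$ is injective on $F(\f)$, so
\[ \mu(\bar I_n) = \dim_k F(\bar\f)_n = \dim_k F(\f)_n - \dim_k x^\circ F(\f)_{n-1} = \mu(I_n) - \mu(I_{n-1}). \]
If $\mu(I_{n-1}) < \binom{n-1+r}{r}$, then $n \ge 2$ (for $n=1$ this would say $1 = \mu(I_0) < \binom{r}{r} = 1$), and the inner induction applied to $(r,n-1)$ produces $r$ general elements of $I_1$ valid for all $m \ge n-1$, and in particular for all $m \ge n$. Otherwise $\mu(I_{n-1}) \ge \binom{n-1+r}{r}$, and Pascal's identity $\binom{n+r}{r} = \binom{n+r-1}{r}+\binom{n+r-1}{r-1}$ gives
\[ \mu(\bar I_n) = \mu(I_n) - \mu(I_{n-1}) < \binom{n+r}{r} - \binom{n+r-1}{r} = \binom{n+(r-1)}{r-1}, \]
so the numerical hypothesis holds for $(\bar R, \bar\f, r-1)$ at level $n$, and the outer induction yields general elements $\bar x_2, \dots, \bar x_r \in \bar I_1$ with $\bar I_m = (\bar x_2, \dots, \bar x_r)\bar I_{m-1}$ for all $m \ge n$.

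It remains to lift. Choose lifts $x_2, \dots, x_r \in I_1$ of the $\bar x_i$ that are still general in $I_1$; this is routine Zariski-open bookkeeping, handled via the continuity of the quotient map $I_1/\m I_1 \to \bar I_1/\bar\m \bar I_1$ (Remark \ref{rmk2}) together with the technique of Lemma \ref{lem1} and Proposition \ref{mul-fil-pro}. Then for each $m \ge n$ one has $I_m + (x) = (x_2, \dots, x_r)I_{m-1} + (x)$; since $(x_2, \dots, x_r)I_{m-1} \subseteq I_1 I_{m-1} \subseteq I_m$, the modular law together with $I_m \cap (x) = xI_{m-1}$ gives $I_m = (x_2,\dots,x_r)I_{m-1} + xI_{m-1} = (x, x_2, \dots, x_r)I_{m-1}$, which closes the induction. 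The main obstacle is keeping the double induction well-founded while threading the Pascal case split --- in particular, verifying that the base level $n=1$ of the inner induction always falls into the second case and is therefore discharged purely by the $(r-1)$-step; a secondary technical matter is to check carefully that equimultiplicity, the grade bound on $\gr_{\bar\f}(\bar R)_+$, and the Cohen-Macaulayness of $F(\bar\f)$ all genuinely survive the passage modulo the single general element $x$.
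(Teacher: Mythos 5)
Your proof is correct and follows essentially the same strategy as the paper's: it is the same descent by a general superficial element $a \in I_1$, with the same case split on $\mu(I_{n-1})$ against $\binom{n-1+r}{r}$ and the same use of Pascal's identity, the same references to Lemma~\ref{depthff}, \cite[Lemma 2.1, Prop.\ 3.5]{huckabaMarley}, and \cite[Theorem 2.8]{cortadellasZar} for the regularity of $a^*$ and $a^\circ$ and the colon formulas, and the same lift back via Lemma~\ref{lem1}. The paper phrases it as a minimal-counterexample argument (minimize $r$, then $n$ for that $r$) rather than an explicit double induction, but these are equivalent; you add a small but worthwhile check that the paper leaves tacit, namely that Case 1 (inner descent on $n$) cannot fire when $n=1$ because $\mu(I_0)=1=\binom{r}{r}$, so the inner induction is well-founded.
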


\begin{proof}
	If $r=0$, then $(0)$ is the minimal reduction of $\f$. Now $\mu(I_n)< \binom{n+0}{0}=1$ implies $I_n=(0)$ and hence $I_m=(0)$ for all $m \geq n$. Therefore $I_m=(0)I_{m-1}$ for all $m \geq n$. Note that if $r = 1$, then $\dim R \ge r = 1$. Thus the result follows from Theorem \ref{ESfdim1}. Therefore we may assume that $r \geq 2$. 

	Suppose the result is false. Choose a counter example $(R, \m)$ in which $r$ is minimal and $n$ is minimal for this given value of $r$. Let $\dim R= d \geq 0$. If $d = 0$, then $r = 0$ and if $d = 1$, then $r = 0$ or $1$. We have seen that in all cases the result holds. So we may assume that $d \geq 2$.
	As $R$ is Cohen-Macaulay, by Lemma \ref{depthff} there exists a nonzerodivisor $a \in I_1$ such that $a^\circ$ and $a^*$ are superficial in $F(\f)$ and $\gr_{\f}(R)$ respectively. Using \cite[Lemma 2.1]{huckabaMarley}, $a^*$ is $\gr_{\f}(R)$-regular and from \cite[Proposition 3.5]{huckabaMarley} it then follows that $(a) \cap I_n= a I_{n-1}$ for all $n \ge 1.$ Since $\depth F(\f) \ge 2$, $a^\circ$ is a nonzerodivisor in $F(\f)$ so \cite[Theorem 2.8]{cortadellasZar} implies that $(a) \cap \m I_n= a \m I_{n-1}$ for all $n \ge 1.$ 

	Set $\overline{R}=R/(a)$ and $\overline{I_n}$ to be the image of $I_n$ in $\overline{R}$. Then  
	\begin{align*}
	\frac{\overline{I_n}}{\m \overline{I_n}} 
	= \frac{I_n+(a)}{\m I_n+(a)}
	= \frac{I_n+(\m I_n+(a))}{\m I_n+(a)}
	=\frac{I_n}{\m I_n+\left((a)\cap I_n \right)}
	=\frac{I_n}{\m I_n+ aI_{n-1}}.
	\end{align*}
	Thus for all $n \ge 1$,
	\begin{align*}
	\mu(\overline{I_n})
	&= \dim_k ~I_n/ \left(\m I_n+ aI_{n-1}\right)\\
	&=\dim_k ~I_n/\m I_n- \dim_k \left(\m I_n+ aI_{n-1}\right)/ \m I_n\\
	&=\dim_k ~I_n/\m I_n- \dim_k ~ aI_{n-1}/ \left(\m I_n \cap aI_{n-1}\right).
	\end{align*}
	Now 
	$$\m I_n \cap aI_{n-1}= \m I_n \cap (a) \cap aI_{n-1}= a\m I_{n-1} \cap a I_{n-1}=a\m I_{n-1}$$ 
	and hence $\mu(\overline{I_n})= \dim_k ~I_n/\m I_n- \dim_k ~ aI_{n-1}/a\m I_{n-1}$ for all $n \ge 1$. Note that $aI_{n-1}/a\m I_{n-1} \simeq I_{n-1}/\m I_{n-1}$ as $a$ is a nonzerodivisor  in $R$. This implies that $\mu(\overline{I_n})= \mu(I_n)-\mu(I_{n-1})$ for all $n \ge 1$.

	{\bf Case 1}: $\mu(I_{n-1})< \binom{n-1+r}{r}$.

	By minimality of $n$ for the chosen $r$, there exist $r$ general elements $x_1, \ldots, x_r \in I_1$ such that $I_m = (x_1,\ldots,x_r)I_{m-1}$ for all $m \geq n-1$, a contradiction.

	{\bf Case 2}: $\mu(I_{n-1})\geq \binom{n-1+r}{r}$.

	We get $\mu(\overline{I_n})< \binom{n+r}{r}-\binom{n-1+r}{r} = \binom{n+r-1}{r-1}$. Set $\overline{\f}=\f/(a)=\{\overline{I_n}\}_{n \in \N}$. We claim that $F(\overline{\f}) \simeq F(\f)/({a}^\circ)$ and $\gr_{\overline{\f}}(\overline{R}) \simeq \gr_{\f}(R)/({a}^*)$. Indeed, 
	\begin{align*}
	F(\overline{\f}) 
	\simeq \bigoplus_{n=0}^{\infty} \frac{I_n + (a)}{\m I_n + (a)} 
	\simeq \bigoplus_{n=0}^{\infty} \frac{I_n}{\m I_n +(I_n \cap (a))}
	\simeq \bigoplus_{n=0}^{\infty} \frac{I_n}{\m I_n + (a)I_{n-1}} 
	\simeq F(\f)/({a}^\circ) .
	\end{align*}
	Similarly, 
	\begin{align*}
	\gr_{\overline{\f}}(\overline{R})
	\simeq \bigoplus_{n=0}^{\infty} \frac{I_n + (a)}{I_{n+1} + (a)} 
	\simeq \bigoplus_{n=0}^{\infty} \frac{I_n}{I_{n+1}+(I_n \cap (a))}
	\simeq \bigoplus_{n=0}^{\infty} \frac{I_n}{I_{n+1} + (a)I_{n-1}}
	\simeq \gr_{\f}(R)/({a}^*) .
	\end{align*}
	As $a^*$ and $a^\circ$ are regular elements in $\gr_{\f}(R)$ and $F(\f)$ respectively, $F(\overline{\f})$ is Cohen-Macaulay with $l(\overline{\f})=l(\f)-1=r-1 \geq 1$ and
	$$\grade \gr_{\overline{\f}}(\overline{R})_+ = \grade \gr_{\f}(R)_+ -1 \geq l(\f)-1=l(\overline{\f}).$$ 
	Since $R/(a)$ is Cohen-Macaulay of dimension $d-1$ and $(R/(a))/(I_1/(a)) \simeq R/I_1$, it follows that $\dim R/(a)-\hgt (I_1/(a))= \dim R -\hgt I_1$. So $(d-1)-\hgt (I_1/(a))=d- \hgt I_1$ and hence $\hgt (I_1/(a)) = \hgt I_1 - 1=r-1$. Thus $\overline{\f}$ is an equimultiple good filtration. By minimality of $r$, there exist $r-1$ general elements $\overline{x_1}, \ldots, \overline{x_{r-1}} \in \overline{I_1}$ such that $\overline{I_m} = (\overline{x_1},\ldots,\overline{x_{r-1}}) \overline{I_{m-1}}$ for all $m \geq n$. By our choice $0 \neq a+\m I_1 \in I_1/\m I_1=F(I_1)_1$ so by Lemma \ref{lem1} it follows that $a, x_1, \ldots, x_{r-1} \in I_1$ are $r$ general elements for which $I_m=(a, x_1, \ldots, x_{r-1}) I_{m-1}$ for all $m \geq n$, a contradiction.
\end{proof}

\section{Examples}
\subsection{\bf Contracted ideals} 

Let $(R,\m)$ be a $2$-dimensional regular local ring. An $\m$-primary ideal $I$ is called a contracted ideal \cite[App.~5]{zariskiSamuel}, if there exists an $x \in \m \backslash \m^2$ such that $IR[\m/x] \cap R=I$. 
Zariski \cite{zariski} proved that the product of contracted (complete) ideals in $R$ is contracted (complete) and a complete ideal is contracted. Set $o (I)= \m$-adic order of $I= \max \{n \mid I \subseteq \m^n\}$. Lipman \cite{lipman} and Rees \cite{rees} proved that if $I$ is  contracted then $\mu(I)=1+o(I)$, where $\mu(I)$ denotes the minimal number of generators of $I.$ Huneke-Sally \cite{hunekeSally} proved that if $R/\m$ is infinite, then the converse is also true.
Thus we have the following result.
\begin{theorem}\label{lip}
	Let $(R,\m)$ be a $2$-dimensional regular local ring with infinite residue field and $I$ be an $\m$-primary ideal in $R$. Then $I$ is contracted if and only if $\mu(I)=o(I)+1$.
\end{theorem}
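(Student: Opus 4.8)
The statement is the conjunction of two known implications, and the plan is simply to assemble them: the forward direction --- if $I$ is contracted then $\mu(I)=o(I)+1$ --- is the computation of Lipman \cite{lipman} and Rees \cite{rees}, while the converse, under the standing hypothesis that $R/\m$ is infinite, is the theorem of Huneke and Sally \cite{hunekeSally}. In the paper itself it therefore suffices to record the citation. Below I indicate how a self-contained argument would run.

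For the implication ``contracted $\Rightarrow$ $\mu(I)=o(I)+1$'' I would pass to the blow-up chart. Fix $x\in\m\setminus\m^2$ with $IR[\m/x]\cap R=I$, put $n=o(I)$ and $S=R[\m/x]$, so that $\m S=xS$ and, by Zariski's theory of complete ideals in two-dimensional regular local rings (as recalled above), $S$ is a two-dimensional regular domain whose closed points lying over $\m$ are finite in number. Since $I\subseteq\m^n$ one has $IS\subseteq x^nS$, hence $IS=x^nI'$ for an ideal $I'$ of $S$, and by definition of the order $I'\not\subseteq xS=\m S$. The key points are: (i) from $I=IS\cap R=x^nI'\cap R$ and $\m S=xS$ one can read off a minimal generating set of $I$ from that of $I'$ after dividing by $x^n$ and localizing at the points over $\m$; (ii) since $R/\m$ is infinite the element $x$ may be chosen generic, so that its leading form $x^*$ in $\gr_\m(R)\cong k[U,V]$ does not divide the leading form of some element of $I$ of order $n$, which forces the transform $I'$ to have strictly smaller order at each relevant point where it is not already trivial; (iii) an induction on $o(I)$, with base case $o(I)=0$, i.e. $I=R$ and $\mu(I)=1$, together with additivity of $\mu$ and $o$ across the finitely many points of the chart, yields $\mu(I)=o(I)+1$.

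For the converse --- which I expect to be the real obstacle --- I would work with the leading-form ideal $I^*\subseteq\gr_\m(R)\cong k[U,V]$ and the Artinian graded ring $\gr_\m(R)/I^*\cong\gr_\m(R/I)$. Writing $\mu(I)=\dim_k I/\m I$ as the sum over $j\ge n$ of $\dim_k\bigl((I\cap\m^j+\m I)/(I\cap\m^{j+1}+\m I)\bigr)$ and comparing with the Hilbert function of $\gr_\m(R)/I^*$, one sees that the numerical equality $\mu(I)=o(I)+1$ forces a generic linear form $\ell=x^*$ (available because $R/\m$ is infinite) to behave like a ``superficial'' element of $\gr_\m(R)/I^*$, i.e. to be as injective as possible in positive degrees; translating this statement about leading forms back to $R$ gives $\m I:x=I$, which is exactly the assertion $IR[\m/x]\cap R=I$. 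The delicate step is precisely this translation: one must extract the purely ideal-theoretic identity $\m I:x=I$ from a single numerical equality, and this genuinely uses both the two-dimensionality (so that $\gr_\m(R)$ is a polynomial ring in only two variables and the tangent cone is controlled by one extra variable modulo the generic linear form) and the infinitude of $R/\m$ (to choose $x$ generic). The forward implication, by contrast, is essentially bookkeeping once the transform $I'$ and the induction on order are in place.
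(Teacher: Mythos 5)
Your proposal matches the paper's treatment exactly: the paper offers no proof at all, but simply records that Lipman and Rees established the forward implication (contracted implies $\mu(I)=o(I)+1$) and that Huneke--Sally established the converse under the infinite-residue-field hypothesis, and you correctly identify this as the intended justification. The self-contained sketch you append goes beyond what the paper supplies and is not needed for the argument the paper makes.
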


Let $R$ be a $2$-dimensional regular local ring and $I,J$ be contracted ideals. Using Corollary \ref{mulg-adic}, we find a choice of the joint reduction vector $(m, n)$ such that 
\begin{equation}\label{eq2}
I^mJ^n= aI^{m-1}J^n+bI^mJ^{n-1}
\end{equation}
for some $a \in I$ and $b \in J.$

\begin{proposition}\label{jred}
	Let $(R,\m)$ be a $2$-dimensional regular local ring with infinite residue field. If $I$ and $J$ are contracted ideals, then in \eqref{eq2} we can take $m=2\cdot o(J)-1$ and $n=2\cdot o(I)-1$.
\end{proposition}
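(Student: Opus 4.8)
The plan is to apply Corollary \ref{mulg-adic} with $s = 2$, the two ideals being $I$ and $J$, and $r_1 = r_2 = 1$. To invoke the corollary I need to exhibit a point $(n_1, n_2) = (m, n)$ for which the numerical hypothesis $\mu(I^m J^n) < \binom{m+1}{1}\binom{n+1}{1} = (m+1)(n+1)$ holds; the conclusion of the corollary then immediately gives elements $a \in I$, $b \in J$ with $I^m J^n = a I^{m-1} J^n + b I^m J^{n-1}$, which is exactly \eqref{eq2}, and in fact gives the joint reduction equation for all $(m', n') \geq (m, n)$. So the entire task reduces to verifying the inequality $\mu(I^m J^n) < (m+1)(n+1)$ for $m = 2\,o(J) - 1$ and $n = 2\,o(I) - 1$.

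The key inputs are the following two facts about contracted ideals in a $2$-dimensional regular local ring with infinite residue field. First, by Zariski's theorem (quoted in the paragraph before Theorem \ref{lip}), the product of contracted ideals is contracted, so $I^m J^n$ is a contracted $\m$-primary ideal; hence by Theorem \ref{lip}, $\mu(I^m J^n) = o(I^m J^n) + 1$. Second, the $\m$-adic order is additive on products of contracted ideals — indeed $o$ is multiplicative/additive in the sense that $o(I^m J^n) = m\, o(I) + n\, o(J)$ (this follows because for contracted ideals, and more generally for complete ideals or by the theory of the order valuation, the order of a product is the sum of orders; for contracted ideals it can be read off from $\mu$ via Theorem \ref{lip} together with Zariski's product theorem, or cited directly). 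Combining these, $\mu(I^m J^n) = m\, o(I) + n\, o(J) + 1$. Writing $p = o(I)$, $q = o(J)$, I must therefore check that
\[
m q + n p + 1 < (m+1)(n+1) = mn + m + n + 1,
\]
i.e. $mq + np < mn + m + n$, for the specific choice $m = 2q - 1$, $n = 2p - 1$.

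Substituting $m = 2q-1$ and $n = 2p-1$: the left side is $(2q-1)q + (2p-1)p = 2q^2 - q + 2p^2 - p$, and the right side is $(2q-1)(2p-1) + (2q-1) + (2p-1) = (4pq - 2p - 2q + 1) + 2q - 1 + 2p - 1 = 4pq - 1$. So the inequality to verify is $2p^2 + 2q^2 - p - q < 4pq - 1$, i.e. $2(p - q)^2 < p + q - 1$, i.e. $2(p-q)^2 + 1 \leq p + q$. I expect this to be where a little care is needed: it is \emph{not} true for all $p, q \geq 1$ — for instance it fails when $p = q$ is large, or when $|p - q|$ is large. So the proposition as stated must be implicitly using $m, n \geq 1$ (forcing $p, q \geq 1$) together with some additional constraint, or else I have mis-stated the order-additivity formula. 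The honest move here is to recheck the exponent bookkeeping against Corollary \ref{mulg-adic}: perhaps the intended reading gives a strictly weaker requirement, or perhaps the roles of $(m,n)$ versus $(o(I), o(J))$ are arranged so that the binomial product on the right is larger. I would reconcile the arithmetic so that the final inequality reduces to something that holds for all contracted ideals (e.g. an inequality of the form $2(p-q)^2 < $ something genuinely larger), and if a genuine hypothesis like $o(I) = o(J)$ or a bound on $|o(I) - o(J)|$ is needed, I would surface it explicitly rather than bury it. The main obstacle, then, is not conceptual — the machinery of Corollary \ref{mulg-adic} does all the real work — but rather pinning down the exact numerical inequality and confirming it holds in the stated generality.
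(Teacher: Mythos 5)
Your strategy is exactly the paper's: reduce to checking the numerical inequality $\mu(I^m J^n) < (m+1)(n+1)$, use that $I^mJ^n$ is contracted (Zariski) so $\mu(I^mJ^n)=o(I^mJ^n)+1$, and use additivity of the $\m$-adic order so $o(I^mJ^n)=m\,o(I)+n\,o(J)$. The trouble you ran into is a self-inflicted swap: after correctly writing $\mu(I^mJ^n)=m\,o(I)+n\,o(J)+1=mp+nq+1$ (with $p=o(I)$, $q=o(J)$), you then recorded the inequality to be checked as $mq+np+1<(m+1)(n+1)$, which transposes $p$ and $q$. With the correct pairing and the prescribed values $m=2q-1$, $n=2p-1$, one gets
\[
mp+nq+1=(2q-1)p+(2p-1)q+1=4pq-(p+q)+1 \quad\text{and}\quad (m+1)(n+1)=4pq,
\]
so the required inequality is simply $p+q>1$, which holds because $p,q\ge 1$. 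There is no hidden hypothesis on $|o(I)-o(J)|$ and the proposition is true in the stated generality; the false constraint $2(p-q)^2+1\le p+q$ you derived is an artifact of the swap. (The paper phrases the same computation slightly differently, by completing the product to $(m-q+1)(n-p+1)-(p-1)(q-1)$ and evaluating it at $m=2q-1$, $n=2p-1$ to get $p+q-1>0$, but it is the same arithmetic.) So your conceptual reduction is right; just restore the correct coefficients and the proof closes.
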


\begin{proof}
	Let $I, J$ be contracted ideals. Then $I^rJ^s$ is also a contracted ideal for any $r,s$. Set $o (I)=\alpha$ and $o(J)=\beta$. Note that $\alpha \geq 1$ and $\beta \geq 1$. Then $\mu(I^mJ^n)= o(I^mJ^n)+1= m \alpha+n \beta+1$. If we can write $\mu(I^mJ^n)< \binom{m+1}{1}\binom{n+1}{1}$ for some $m,n$, then by Corollary \ref{mulg-adic} we get equation \eqref{eq2}. So we want $(m,n)$ to be a solution of the equation 
	\begin{align*}
	\alpha x + \beta y+1< (x+1)(y+1) &\iff
	 0< xy-(\alpha-1)x -(\beta-1)y  \\
	 &\iff 0< (x-\beta+1)(y-\alpha+1)-(\alpha-1)(\beta-1)
	\end{align*}
	with $m,n \geq 0$. 
	Now take $m=2 \beta-1$ and $n=2\alpha-1$. Then we get \[(2\beta-1- \beta+1)(2\alpha-1-\alpha+1)-(\alpha-1)(\beta-1)=\alpha+\beta-1>0.\] Thus the pair $(2 \beta-1,2 \alpha-1)$ satisfies the above equation.
\end{proof}

The following example illustrates that Corollary \ref{mulg-adic} gives a better bound than the bound given by L. O'Carroll's result \cite[Corollary 3.2]{carroll}. 
\begin{example} \label{counter}
	Let $k$ be a field and $R=k[[x,y]]$ be the power series ring over $k.$ Let $I=(x, y^2)$ and $J=(y, x^2).$ Then $IJ=xJ+yI.$ Therefore the joint reduction vector of $(I,J)$ with respect to $(x,y)$ is $(1,1).$ As $IJ=(xy,x^3,y^3)$ and $I^2J^2 = (x^2y^2,x^4y,xy^4,x^6,y^6)$, it follows that $\mu(IJ)=3 \nless \binom{1+2}{2}=3$ but $\mu(I^2J^2)=5 < \binom{2+2}{2}=6$. So by \cite[Corollary 3.2]{carroll} we get 
	\[I^2J^2=bI^2J+aIJ^2\] 
	for some $a \in I$ and $b \in J$, giving the joint reduction vector to be $(2,2)$. Since $I$ and $J$ are contracted ideals, by Proposition \ref{jred} we get $IJ=bI+aJ$ for some $a \in I$ and $b \in J$, clearly giving the exact value of the joint reduction vector. 
\end{example}

\subsection{\bf Lexsegment ideals} 
\begin{definition} \cite{herzog2017}
	An ideal $I \subseteq k[x_1, \ldots, x_n]$ is called a {\it lexsegment ideal}, if for any monomial $u \in I$ and all monomials $v$ with $\deg v=\deg u$ and $v>u$ in the lexicographical order, it follows that $v \in I$.
\end{definition}

\begin{example}
	Let $I$ and $J$ be two lexsegment ideals in $R=k[x,y]$. Then by \cite[Lemma 4.3]{herzog2017}, 
	\[I=(x^r, x^{r-1}y^{b_1}, \ldots, x^{r-p}y^{b_p}) \text{ and } J=(x^s, x^{s-1}y^{a_1}, \ldots, x^{s-q}y^{a_q})\] 
	for some integers $0<b_1< \cdots<b_p$ and $0<a_1< \cdots<a_q$. Clearly $\mu(I)=p+1$ and $\mu(J)=q+1$. By \cite[Corollary 4.5]{herzog2017} it follows that 
	\[\mu(I^nJ^m)=n \mu(I)+m\mu(J)-(n+m-1)=n(p+1)+m(q+1)-(n+m-1)= pn+qm+1.\] 
	If $\mu(I^nJ^m)< \binom{n+1}{1}\binom{m+1}{1}=(n+1)(m+1)$, then by Corollary \ref{mulg-adic} there exist $a \in I$ and $b \in J$ such that $I^nJ^m=aI^{n-1}J^{m}+bI^nJ^{m-1}$, i.e., if $pn+qm+1< nm+n+m+1$, or \[(p-1)n +(q-1)m<nm.\] Note that if $p=1$ and $q=2$, then the above equation is satisfied for $n=2,m=1$ and hence joint reduction vector is $(2,1)$. If we take $p=1, q=2$ and $n=m$, then the minimum choice of $n$ such that $n<n^2$ is $2$. Notice $\mu(I^2J^2)=7 \nless \binom{2+2}{2}=6$, so L. O'Carroll's result \cite[Corollary 3.2]{carroll} is not applicable for $n=m=2$.
\end{example}

The following examples show that Theorem \ref{ESf} gives better bound for the reduction numbers of respective filtrations. We use the following proposition to characterize Cohen-Macaulay property of fiber cone. 

\begin{proposition} \cite[Proposition 3.7]{cortadellasZar} \label{cort}
	Let $\f=\{I_n\}$ be a good filtration such that $I_1$ is $\m$-primary. Assume that $\gr_{\f}(R)$ is Cohen-Macaulay and let $J$ be a minimal reduction of $\f.$ The following are equivalent: \\
	{\rm(1)} $F(\f)$ is Cohen-Macaulay. \\
	{\rm(2)} $J \cap \m I_n = J \m I_{n-1}$ for all $1 \le n \le r_J(\f).$
\end{proposition}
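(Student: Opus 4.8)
Here is the route I would take to prove this (known) statement of Cortadellas--Zarzuela.

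\textbf{Reductions.} Since $I_1$ is $\m$-primary, $\dim F(\f)=\dim\gr_{\f}(R)=\dim R=:d$, and $\gr_{\f}(R)$ being Cohen-Macaulay of dimension $d$ forces $R$ to be Cohen-Macaulay. A minimal reduction $J$ of $\f$ has $\sqrt J=\sqrt{I_1}=\m$, so $J=(a_1,\dots,a_d)$ is a parameter ideal, generated by an $R$-regular sequence; moreover $(a_1^\circ,\dots,a_d^\circ)F(\f)$ is a reduction of the irrelevant maximal ideal of the standard graded ring $F(\f)$ (because $JI_n=I_{n+1}$ for $n\gg0$), so $a_1^\circ,\dots,a_d^\circ$ is a homogeneous system of parameters of $F(\f)$, and likewise $a_1^*,\dots,a_d^*$ is one of $\gr_{\f}(R)$, hence an $\gr_{\f}(R)$-regular sequence. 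Two simplifications: for $n>r_J(\f)$ we have $I_n=JI_{n-1}$, whence $\m I_n=J\m I_{n-1}$ and $J\cap\m I_n=J\cap J\m I_{n-1}=J\m I_{n-1}$, so condition (2) is automatic in that range and equivalent to ``$J\cap\m I_n=J\m I_{n-1}$ for all $n\ge1$''; and, since a graded ring is Cohen-Macaulay iff some homogeneous system of parameters is a regular sequence, condition (1) is equivalent to ``$a_1^\circ,\dots,a_d^\circ$ is an $F(\f)$-regular sequence''. So it suffices to prove the equivalence of these two reformulations.

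\textbf{Induction on $d$.} With $d=0$ trivial, assume $d\ge1$ and pass to $\overline{R}=R/(a_1)$ and the good filtration $\overline{\f}=\f/(a_1)=\{\overline{I_n}\}_{n}$. As $a_1^*$ is a nonzerodivisor on $\gr_{\f}(R)$, one has $(a_1)\cap I_n=a_1I_{n-1}$ and $(I_n:a_1)=I_{n-1}$ for all $n$ by \cite[Proposition 3.5]{huckabaMarley}, and exactly the computation in the proof of Theorem \ref{ESf} yields graded isomorphisms $\gr_{\overline{\f}}(\overline{R})\cong\gr_{\f}(R)/(a_1^*)$ and $F(\overline{\f})\cong F(\f)/(a_1^\circ)$; the hypotheses of the proposition pass to $\overline{\f}$, with $\overline{J}=(a_2,\dots,a_d)\overline{R}$ a minimal reduction and $r_{\overline{J}}(\overline{\f})\le r_J(\f)$. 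Now $a_1^\circ,\dots,a_d^\circ$ is $F(\f)$-regular iff $a_1^\circ$ is a nonzerodivisor on $F(\f)$ and $a_2^\circ,\dots,a_d^\circ$ is a regular sequence on $F(\f)/(a_1^\circ)\cong F(\overline{\f})$, and by the inductive hypothesis the latter is equivalent to $\overline{J}\cap\overline{\m}\,\overline{I_n}=\overline{J}\,\overline{\m}\,\overline{I_{n-1}}$ for all $n$. Two translations close the argument: $a_1^\circ$ is a nonzerodivisor on $F(\f)$ iff $(a_1)\cap\m I_n=a_1\m I_{n-1}$ for all $n$ (if $a_1x\in\m I_n$ then $x\in(I_n:a_1)=I_{n-1}$ and $x+\m I_{n-1}$ is killed by $a_1^\circ$; conversely $a_1$ is a nonzerodivisor on $R$); and, since $a_1\in J$, the modular law gives $J\cap(\m I_n+(a_1))=(a_1)+(J\cap\m I_n)$, so the condition for $\overline{\f}$ unravels to $(a_1)+(J\cap\m I_n)=(a_1)+J\m I_{n-1}$ for all $n$.

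\textbf{Combining, and the hard point.} It remains to show that these two families of equalities together are equivalent to $J\cap\m I_n=J\m I_{n-1}$ for all $n$. Granting both, for $x\in J\cap\m I_n$ write $x=a_1s+w$ with $w\in J\m I_{n-1}$; then $a_1s=x-w\in(a_1)\cap\m I_n=a_1\m I_{n-1}$, so $s\in\m I_{n-1}$ (cancel the nonzerodivisor $a_1$) and $x\in a_1\m I_{n-1}+J\m I_{n-1}=J\m I_{n-1}$; the reverse inclusion is immediate, and the $J$-condition also yields the unravelled $\overline{\f}$-condition. The remaining implication --- that $J\cap\m I_n=J\m I_{n-1}$ for all $n$ implies $(a_1)\cap\m I_n=a_1\m I_{n-1}$ for all $n$, i.e. that a minimal generator of $J$ stays a nonzerodivisor on $F(\f)$ once the $J$-condition is known --- is the delicate one: here one extracts $a_1$ using the regular-sequence identity $(a_1)\cap(a_2,\dots,a_d)=a_1(a_2,\dots,a_d)$ in $R$ together with the Valabrega--Valla equalities $(a_2,\dots,a_d)\cap I_m=(a_2,\dots,a_d)I_{m-1}$ (from Cohen-Macaulayness of $\gr_{\f}(R)$) and the colon relation $(I_n:a_1)=I_{n-1}$. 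I expect this to be the main obstacle: every other step is formal, whereas here the regular-sequence structure of $R$ and the Valabrega--Valla correspondence for $\gr_{\f}(R)$ must be used in tandem; it is essentially the content of \cite[Theorem~2.8]{cortadellasZar}.

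\textbf{A computational alternative.} One could instead use that $F(\f)$ is Cohen-Macaulay iff $\length\bigl(F(\f)/(a_1^\circ,\dots,a_d^\circ)F(\f)\bigr)=e(F(\f))$, the multiplicity; the left side is the finite sum $\sum_{n\ge0}\dim_k\bigl(I_n/(JI_{n-1}+\m I_n)\bigr)$ (which vanishes for $n\ge r_J(\f)$), and one checks this equals $e(F(\f))$ using Cohen-Macaulayness of $\gr_{\f}(R)$ and the hypothesis $J\cap\m I_n=J\m I_{n-1}$, trading the bookkeeping above for a Hilbert-function computation.
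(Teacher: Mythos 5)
The paper states this Proposition without proof, quoting \cite[Proposition~3.7]{cortadellasZar}, so there is no in-paper argument to compare against; the proposal has to be judged on its own terms. Your reductions, the induction set-up, the isomorphisms $F(\overline{\f})\cong F(\f)/(a_1^\circ)$ and $\gr_{\overline{\f}}(\overline{R})\cong\gr_{\f}(R)/(a_1^*)$, the unravelling of the $\overline{\f}$-condition to $(a_1)+(J\cap\m I_n)=(a_1)+J\m I_{n-1}$, the verification that (A): $(a_1)\cap\m I_n=a_1\m I_{n-1}$ together with (B): $(a_1)+(J\cap\m I_n)=(a_1)+J\m I_{n-1}$ give (C): $J\cap\m I_n=J\m I_{n-1}$, and the trivial (C)$\Rightarrow$(B) are all correct. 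In particular the direction (1)$\Rightarrow$(2) is fully proved.

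The gap is exactly where you flag one: (C)$\Rightarrow$(A), which is what (2)$\Rightarrow$(1) hinges on. Your sketch --- write $a_1x=a_1s+w$ with $s\in\m I_{n-1}$, $w\in(a_2,\dots,a_d)\m I_{n-1}$; use $(a_1)\cap(a_2,\dots,a_d)=a_1(a_2,\dots,a_d)$ and cancel $a_1$ to get $x-s\in(a_2,\dots,a_d)$; then Valabrega--Valla to get $x-s\in(a_2,\dots,a_d)\cap I_{n-1}=(a_2,\dots,a_d)I_{n-2}$ --- ends at a conclusion that is too weak, since $(a_2,\dots,a_d)I_{n-2}\not\subseteq\m I_{n-1}$ in general, so one cannot yet conclude $x\in\m I_{n-1}$. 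For $d=2$ a secondary induction on $n$ repairs this (after extraction one finds the coefficient of $a_2$ lies in $(a_1)\cap\m I_{n-1}=a_1\m I_{n-2}$ by the inductive hypothesis on $n$, whence $x-s\in a_2\m I_{n-2}\subseteq\m I_{n-1}$), but for $d\ge3$ the relations among $a_2,\dots,a_d$ are genuinely Koszul and introduce auxiliary coefficients that the three tools you list do not control, so the same extraction does not close. The appeal to \cite[Theorem~2.8]{cortadellasZar} here is also a mis-attribution: as the present paper uses it (Theorems 4.3, 4.5), that theorem is the translation ``$a^\circ$ is $F(\f)$-regular iff $(a)\cap\m I_n=a\m I_{n-1}$ for all $n$'', i.e.\ the equivalence of (A) with regularity of a single element, not the passage from the joint condition (C) back to (A) for one generator. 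So the direction (2)$\Rightarrow$(1) is not established; I suspect the length-equals-multiplicity comparison you mention at the end, rather than degree-by-degree extraction, is what is actually needed, but it too is left as a one-line sketch.
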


\begin{example}
	Let $k$ be an infinite field such that $p =\chr k \neq 3$ and $R=k[[X,Y,Z]]/(X^3+Y^3+Z^3).$ Then $R$ is a $2$-dimensional Cohen-Macaulay, analytically unramified local ring. Let $x,y,z$ denote the images of $X,Y$ and $Z$ respectively in $R$ and $I=(y,z).$ Then $\f=\{(I^n)^*\}$ is an $I$-admissible filtration and using \cite{GMV}, $(I^k)^* = \m^{k+1}+I^k$ for all $k \ge 1$ and $(I^k)^* = I(I^{k-1})^*$ for all $k \ge 2$. We show that $\gr_{\f}(R) = \oplus_{n \ge 0}(I^n)^*/(I^{n+1})^*$ and $F(\f) = \oplus_{n \ge 0}(I^n)^*/\m(I^n)^*$ are Cohen-Macaulay. 
	
	In order to show that $\gr_{\f}(R)$ is Cohen-Macaulay, using \cite[Theorem 2.3, Corollary 2.1]{viet}, it is sufficient to show that $I(I^{k-1})^* = (I^k)^* \cap I$ for all $k \ge 1$. This is true as $I(I^{k-1})^* = (I^k)^* \subseteq I$ for all $k \ge 2$. Hence, $\gr_{\f}(R)$ is Cohen-Macaulay. Using Proposition \ref{cort}, for $F(\f)$ to be Cohen-Macaulay, it is sufficient to show that $I \cap \m I^* = I\m.$ Since $\m^3 \subseteq \m I$ it follows that
	\begin{align*}
	I \cap \m I^* = I \cap \m(I+\m^2) = I \cap (\m I+ \m^3) = I \cap \m I = \m I.
	\end{align*}
	As $(I^2)^* = I^2 + \m^3 =(x^2y,x^2z,y^2,yz,z^2)$ and $\mu((I^2)^*) = 5 < \binom{2+2}{2} = 6$, Theorem \ref{ESf} implies that $r(\f) \le 1$ and hence $r(\f)=1$ as $I \neq I^*$.
\end{example}

\begin{example}\label{ex2}
	Let $R=k[X,Y,U]$ be a polynomial ring in three variables with unique homogeneous maximal ideal $\m=(X,Y,U)$ and an infinite residue field $k.$ Set $T=R_{\m}.$ Then $T$ is a regular local ring with unique maximal ideal $\m R_{\m}$ and infinite residue field $k.$ Let $I=(X^2,Y^2,U)R$ and let $\f=\{\overline{I^n}\}_{n \geq 1}$ be the integral closure filtration of $I$. Since $I$ is a homogeneous ideal, it is clear that $I\overline{I^n}=\overline{I^{n+1}}$ in $T$ if and only if $I\overline{I^n}=\overline{I^{n+1}}$ in $R$. As $T$ is an analytically unramified Noetherian local ring, by \cite[Corollary 9.2.1]{swansonHuneke} it follows that $\f$ is an $I$-good filtration in $T$. 

	We first claim that $\overline{I}=(X^2,XY,Y^2,U).$ Since $XY$ satisfies the equation $t^2-X^2.Y^2=0,$ $XY \in \overline{I}.$ It is sufficient to show that the ideal $(X^2,XY,Y^2,U)$ is integrally closed. Observe that the ideal $(X^2,XY,Y^2)$ is integrally closed in $k[X,Y] = R/(U).$ As contraction of a complete ideal is complete, the claim follows. Consider 
	\[I \overline{I}= (U^2,Y^2U, XYU, X^2U, Y^4, XY^3, X^2Y^2, X^3Y, X^4).\] 
	We have $I^2 \subseteq I \overline{I} \subseteq \overline{I^2}$. In order to show $I \overline{I}=\overline{I^2}$ it is enough to show that $I \overline{I}$ is integrally closed. Now 
	\begin{align*}
	&(U^2,Y^2U, XYU, X^2U, Y^4, XY^3, X^2Y^2, X^3Y, X^4) \\
	&= (X,U^2,Y^2U,Y^4) \cap (YU,U^2,X^2U,Y^4, XY^3, X^2Y^2, X^3Y, X^4)\\
	&= (X,U^2,Y^2U,Y^4) \cap (Y,U^2,X^2U, X^4) \cap (U,\n^4),
	\end{align*}
	where $\n=(X,Y)$ is the unique homogeneous maximal ideal of $K[X,Y]$. In view of  \cite[Proposition 1.4.6]{swansonHuneke} it follows that $(U^2,X^2U, X^4)$ and $(U^2,Y^2U,Y^4)$ are integrally closed. Thus $(X,U^2,Y^2U,Y^4)$ and $(Y,U^2,X^2U, X^4)$ are integrally closed in $R$ and hence integrally closed in $T$ by \cite[Proposition 1.1.4]{swansonHuneke}. Again $(U,\n^4)$ is integrally closed in $T$. Hence $I \overline{I}$ is integrally closed. 

	We now claim that $\mathcal{R}(\f, T)=T[\overline{I}t,\overline{I^2}t^2, \ldots]$ is Cohen-Macaulay. Let $\mathcal{R}(\f, R)$ denote the Rees ring of $R$ with respect to $\f$ and $r(\f)=n_0$. Then $\mathcal{R}(\f, R)=R[\overline{I}t,\overline{I^2}t^2, \ldots, \overline{I^{n_0}}t^{n_0}]$ which is Noetherian. Again by \cite[Proposition 1.4.2]{swansonHuneke} we have $\overline{I^n}$ is a monomial ideal for all $n \geq 1$. Therefore $M=(X,Y,U,\overline{I}t,\overline{I^2}t^2, \ldots)$ is a semi-group of monomials in $X,Y,U$. Moreover, by \cite[Proposition 5.2.4]{swansonHuneke} the integral closure of $R[It]$ in its field of fractions is $\mathcal{R}(\f, R)$. Since $R[It] \subseteq \mathcal{R}(\f, R) \subseteq \Frac(R[It])$ so the field of fractions of $\mathcal{R}(\f, R)$ is also $\Frac(R[It])$. Hence $\mathcal{R}(\f, R)=k[M] \subseteq k[X,Y,U,t]$ is normal. Thus by \cite[Proposition 1]{hochster} $M$ is a normal semigroup of monomials and by \cite[Theorem 1]{hochster} it follows that $\mathcal{R}(\f, R)$ is Cohen-Macaulay. 
	Since $R \subseteq \mathcal{R}(\f, R)$ is a subring so $C=R-\m$ is also a multiplicative closed subset of $\mathcal{R}(\f, R)$. Hence $\mathcal{R}(\f, T)= C^{-1}\mathcal{R}(\f, R)$ is Cohen-Macaulay. Thus by \cite[Corollary 2.1]{viet} we get that $\gr_{\f}(T)$ is Cohen-Macaulay.

	Using \cite[Theorem 2.3]{viet} it follows that $r(\f) \leq \dim T-1=2$. Since $I$ is a minimal reduction of $\f$, $I \overline{I^{n}}=\overline{I^{n+1}}$ for all $n \geq 2$. Thus in our case, $r_I(\f)=1$ (as $I \neq \overline{I}$). Now 
	\[ I \cap (X,Y,U)\overline{I}= I (X,Y,U)=(U^2, YU, XU, Y^3, XY^2, X^2Y, X^3) \]
	and hence by Proposition \ref{cort}, it follows that $F(\f)$ is Cohen-Macaulay. Thus our assumptions in Theorem \ref{ESf} are satisfied and as $\mu(\overline{I^2})=9< \binom{2+3}{3}= 10$, it follows that $r_{I}(\f) \leq 1.$ Hence $r_I(\f)=1.$ Note that $\mathcal{R}(\f, T)$ is Cohen-Macaulay which implies that $r_{J}(\f) \leq 3-1=2$ by \cite[Theorem 2.3]{viet}. This illustrates that we are getting a better bound (in fact exact bound) by our result. 
\end{example}

The following example shows that the depth assumptions in Theorem \ref{ESf} cannot be
dropped. 
\begin{example}
	Let $R=\C[[X,Y,Z]]/(X^4+Y^4+Z^2) = \C[[x,y,z]]$, where $x,y$ and $z$ denote the image of $X,Y$ and $Z$ respectively in $R.$ Then $R$ is a $2$-dimensional Cohen-Macaulay local ring. Put $\m=(x,y,z).$ We first show that $R$ is normal. Set $T=\C[X,Y,Z]/(f)$, where $f=X^4+Y^4+Z^2.$ Put $l_1=X+Y, l_2=X+iY, l_3=X-Y, l_4=X-iY$. Then $f = X^4+Y^4+Z^2 = Z^2 + l_1l_2l_3l_4 \in \C[X,Y][Z]$. By {\it Eisenstein's criterion}, $f$ is irreducible in $\C[X,Y,Z]$ and hence $T$ is a domain. Using \cite[Theorem 4.4.9]{swansonHuneke}, it follows that $\sing(T)=\V(f,\jac(f))=\V(4x^3, 4y^3,2z)=(0,0,0)$. Thus $T_{\mathfrak{p}}$ is regular for any $\mathfrak{p} \in \Spec(T)$ such that $\mathfrak{p} \neq (x,y,z)$. Moreover, as $T$ is Cohen-Macaulay it satisfies $R_1$ and $S_2$. Therefore by \cite[Theorem 23.8]{matsumuraCRT} it is normal. Now $T \subseteq T_{(x,y,z)} \subseteq \Frac(T)$. So $T_{(x,y,z)}$ is normal. Hence by \cite[Section 32]{matsumuraCRT} we get $R$ is normal. 
	
	Let $\f=\{\overline{\m^n}\}_{n \geq 1}$ and $I=(x,y)R.$ By \cite[Theorem 3.1]{watanabe} we have $\overline{\m^n}= I^n+(z)I^{n-2}$ for all $n \ge 1,$ $r(\f)=2$ and $\gr_{\f}(R)$ is Cohen-Macaulay. Using Proposition \ref{cort}, it follows that $F(\f)$ is Cohen-Macaulay if and only if $I \cap \m \overline{\m^2} = I \m^2.$ Observe that $xz \in I \cap \m \overline{\m^2}$ but if $xz \in I \m^2 = (x^3,x^2y,x^2z,xy^2,xyz,y^3,y^2z),$ then $XZ \in (X^3,X^2Y,X^2Z,XY^2,XYZ,Y^3,Y^2Z,Z^2),$ a contradiction. Therefore, $I \cap \m \overline{\m^2} \neq I \m^2.$ and hence $F(\f)$ is not Cohen-Macaulay. Note that $\mu(\overline{\m^2})=4 < \binom{2+2}{2}=6$ whereas $r(\f) =2.$ It shows that without the assumption of the Cohen-Macaulay property of $F(\f)$, Theorem \ref{ESf} may not hold.
\end{example}

\begin{example}
	Let $R=\mathbb{C}[[X,Y]]/(X^4+Y^2) = \mathbb{C}[[x,y]]$ where $x$ and $y$ are images of $X$ and $Y$ in $R$ respectively.  Consider the equimultiple good filtration $\f = \{\overline{\m^n} \}$. We claim that for all $n \ge 2,$ 
	\[\overline{\m^n} = (x^n,x^{n-2}y).\] 
	Write $X^4+Y^2 = (X^2 +iY)(X^2-iY).$ Observe that the linearity of $Y$ implies that $X^2+iY$ and $X^2-iY$ are irreducible in $T=\mathbb{C}[[X,Y]].$ Hence $f_1 = (X^2+iY)$ and $f_2 = (X^2-iY)$ are minimal primes of $(X^4+Y^2)$ in $T.$ Let $n \ge 2.$ Using the property: An element $a \in \overline{\m^n}$ if and only if image of $a$ in $S_1 = T/(f_1)$ is in $\overline{\m^nS_1}$ and image of $a$ in $S_2 = T/(f_2)$ is in $\overline{\m^nS_2}$, it follows that
	\[ \overline{\m^n} = (x^n, x^2+iy) \cap (x^n, x^2-iy). \]
	Therefore, it is sufficient to show that $(x^n, x^2+iy) \cap (x^n, x^2-iy) = (x^n,x^{n-2}y).$ Set $I=(X^n, X^2+iY), J=(X^n, X^2-iY),$ and $L=(X^n,X^{n-2}Y, X^4+Y^2)$. Then we show that $I \cap J=L$ in $T.$ Clearly, $L \subseteq I \cap J.$ Consider the following exact sequence
	\[ 0 \rightarrow \frac{T}{I \cap J} \rightarrow \frac{T}{I} \oplus \frac{T}{J} \rightarrow \frac{T}{I+J} \rightarrow 0. \]
	Since $\ell(T/I) = n = \ell(T/J)$ and $\ell(T/(I+J)) = \ell(T/(X^2,Y)) = 2$, it follows that $\ell(T/I\cap J)=2n-2.$ As $\ell(T/L)=2n-2$, the claim holds. Observe that $\overline{\m^2} \neq (x)\m$ as $y \notin (x)\m$ but $\overline{\m^n} = (x)\overline{\m^{n-1}}$ for all $n \ge 3$. This implies that $r_{(x)}(\f)=2.$ 
	One can partially recover this observation from Theorem \ref{esthm-mul-fil}. Note that $\{1,\overline{y}\}$ forms a basis of $F(\mathcal{F})$ as a $F(\m)=\oplus_{n \ge 0}\m^n/\m^{n+1}$-module, where $\deg y=2.$ Thus $a=\max\{\deg 1, \deg \overline{y} \} = 2.$ Let $n \geq 3.$ As $\mu(\overline{\m^n}) = 2 < \binom{n+1}{1}=n+1$, and as $n \geq a+1$, Theorem \ref{esthm-mul-fil} implies that $\overline{\m^n} = (x)\overline{\m^{n-1}}$, for all $n \geq 3.$ One can also check that $\mu(\overline{\m^2}) = 2 < (2+1)=3.$ But as $2 \ngeq a+1$, we cannot use Theorem \ref{esthm-mul-fil} in this case.
	
	We now check if Theorem \ref{ESf} can be used to predict the reduction number of the filtration $\mathcal{F}.$ Since $(x) \cap \overline{\m^n} = (x)\overline{\m^{n-1}}$ for all $n \ge 1$, using \cite[Proposition 3.5]{huckabaMarley}, it follows that $\gr_{\f}(R)$ is Cohen-Macaulay. As $\mu(\overline{\m^2}) = 2 < (2+1)=3$, one can conclude that $\overline{\m^2}=x\m$ if the fiber cone of the filtration $\f$ is Cohen-Macaulay. But this fails to be true as $y \notin x\m$. This happens due to non-Cohen Macauleyness of $F(\f).$ Using Proposition \ref{cort}, $F(\f)$ is Cohen-Macaulay if and only if $(x) \cap \m\overline{\m^n} = (x)\m\overline{\m^{n-1}}$ for $n=1,2.$ We claim that 
	\[ xy \in (x) \cap \m\overline{\m^2} \setminus (x)\m^2. \]
	Clearly, $xy \in (x) \cap \m\overline{\m^2}.$ If $xy \in (x)\m^2 = (x^3,x^2y,xy^2)$, then $XY \in (X^3,X^2Y,XY^2,X^4+Y^2),$ a contradiction. Hence $F(\mathcal{F})$ is not Cohen-Macaulay.
\end{example}

\bibliographystyle{plain}
\bibliography{EST}
\end{document}